\def\e{{\rm e}}
\def\<{{\langle}}
\def\>{{\rangle}}
\def\R{{\mathbb R}}
\def\D{{\mathcal D}}
\def\N{{\mathbb N}}
\def\d{{\rm d}}
\def\ri{{\rm i}}
\def\Z{{\mathbb Z}}
\def\curl{{\rm curl}}
\def\cgw{\rightharpoonup}
\def\cgws{\stackrel{*}\rightharpoonup}
\def\eps{{\varepsilon}}
\def\l{\alpha}
\def\:{\colon }
\numberwithin{equation}{section}
\newtheorem{theorem}{Theorem}[section]
\newtheorem{lemma}[theorem]{Lemma}
\newtheorem{proposition}[theorem]{Proposition}
\newtheorem{definition}[theorem]{Definition}
\title{Using periodic boundary conditions to approximate the Navier--Stokes equations on $\R^3$ and the transfer of regularity}
\author{James C.\ Robinson\\
\small Mathematics Institute\\
\small University of Warwick\\
\small Coventry CV4 7AL. UK.}
\begin{document}

\maketitle

\begin{abstract}
  This paper considers solutions $u_\alpha$ of the three-dimensional Navier--Stokes equations on the periodic domains $Q_\alpha:=(-\alpha,\alpha)^3$ as the domain size $\alpha\to\infty$, and compares them to solutions of the same equations on the whole space. For compactly-supported initial data $u_\alpha^0\in H^1(Q_\alpha)$, an appropriate extension of $u_\alpha$ converges to a solution $u$ of the equations on $\R^3$, strongly in $L^r(0,T;H^1(\R^3))$, $r\in[1,\infty)$. The same also holds when $u_\alpha^0$ is the velocity corresponding to a fixed, compactly-supported vorticity.

  A consequence is that if an initial compactly-supported velocity $u_0\in H^1(\R^3)$ or an initial compactly-supported vorticity $\omega_0\in H^1(\R^3)$ gives rise to a smooth solution on $[0,T^*]$ for the equations posed on $\R^3$, a smooth solution will also exist on $[0,T^*]$ for the same initial data for the periodic problem posed on ${Q_\alpha}$ for $\alpha$ sufficiently large; this illustrates a `transfer of regularity' from the whole space to the periodic case.\end{abstract}

\section{Introduction}

The aim of this paper is to compare solutions of the Navier--Stokes equations
\begin{equation}\label{NSE}
\partial_tu-\Delta u+(u\cdot\nabla)u+\nabla p=0,\qquad\nabla\cdot u=0,
\end{equation}
posed on `large' periodic domains ${Q_\alpha}:=(-\alpha,\alpha)^3$ and on the whole space $\R^3$. One would expect, when the initial velocity is sufficiently localised, that the solutions on a `large enough' domain should mimic those on $\R^3$, and this approach is the basis of many numerical experiments. Indeed, discussions with Robert Kerr about his numerical investigations (Kerr, 2018) of the trefoil configurations of vorticity from the experiments of Scheeler et al.\ (2014) were the original motivation for this  paper, which gives a rigorous justification of this intuition. 

Section \ref{sec:vorticity} contains an analysis of the velocity fields that arise from such compactly-supported vorticities. The results there both provide a natural family of initial data to consider on the domains $Q_\alpha$, and also serve to illustrate of some of the arguments that follow in a relatively simple setting.

It is shown that given a fixed compactly-supported vorticity $\omega\in H^1(\R^3)$, the corresponding velocities $u_\alpha$ on $Q_\alpha$ have extensions to $\R^3$, $\tilde u_\alpha$, that converge strongly in $H^1(\R^3)$ to the velocity on $\R^3$ reconstructed from $\omega$ using the Biot--Savart Law. Obtaining strong convergence in $H^1(\R^3)$ requires uniform bounds on the `tails'
$$
\int_{x\in Q_\alpha:\ |x|\ge R}|\nabla u_\alpha|^2,
$$
a technique also employed later for solutions of the Navier--Stokes equations, and which goes back at least to Leray (1934).

After recalling some basic existence results for weak and strong solutions of the Navier--Stokes equations in Section \ref{sec:NSE}, it is shown that a subsequence of weak solutions on ${Q_\alpha}$ (solutions bounded in $L^2$ that satisfy the energy inequality) will converge to a weak solution on $\R^3$, given weak convergence of the initial data in $L^2(\R^3)$. This result goes back at least to Heywood (1988), who used it as a way of proving the existence of weak solutions on the whole space.

The main result of the paper concerns the convergence of strong solutions (i.e.\ solutions that remain bounded in $H^1$) given convergence of the initial data in $H^1(\R^3)$; due to uniqueness of the limiting solution this convergence now occurs without the need to extract a subsequence. By bounding the `tails' of $|u_\alpha|^2$ at infinity it is shown that $\tilde u_\alpha$ converges to $u$ strongly in $L^p(0,T;L^2(\R^3))$ for all $p\in[1,\infty)$, and then, via interpolation of the $H^1$ norm between $L^2$ and $H^2$, the boundedness of $u_\alpha$ in $L^2(0,T;H^2(\R^3))$ shows that $\tilde u_\alpha$ converges strongly to $u$ in $L^r(0,T;H^1(\R^3))$, $r\in[1,4)$.

Finally, using this strong convergence, comes what is perhaps the most striking result of the paper: if $u^0\in H^1(\R^3)$ with compact support (or $\omega_0\in H^1(\R^3)$ with compact support) gives rise to a strong solution on $[0,T^*]$ and $u_0^\alpha\in H^1({Q_\alpha})$ converges to $u^0$ in $H^1(\R^3)$, then for large enough $\alpha$ the equations on ${Q_\alpha}$ with initial data $u_\alpha^0$ give rise to a unique strong solution on the same interval, and $\tilde u_\alpha\to u$ as $\alpha\to\infty$ in $L^r(0,T;H^1(\R^3))$, $r\in[1,4)$. This shows that the existence of a regular solution on the whole space implies the existence of a regular solution on a large enough periodic domain.

The relationship between the existence of smooth solutions for the equations in various settings (peiodic boundary conditions, Schwartz solutions on $\R^3$, homogeneous and inhomogeneous problems) has also been considered, from a different point of view, by Tao (2013).

There are other `transfer of regularity' results for the Navier--Stokes equations in different contexts. Constantin (1988) showed that if $u_0\in H^{s+2}$, $s\ge 3$, gives rise to a solution in $L^\infty(0,T^*;H^{s+2})$ of the Euler equations, then for the Navier--Stokes equations with dissipative term $-\nu\Delta u$, one can take $\nu$ sufficiently small to ensure that the same initial condition produces an $H^s$-bounded solution of the Navier--Stokes equations on $[0,T^*]$. 
A variant of this approach in Chernyshenko et al.\ (2007) shows that if $u_0$ gives rise to a regular solution of the Navier--Stokes equations on $[0,T^*]$ then a sufficiently `good' numerical scheme will have a similarly smooth solution that will also exist on $[0,T^*]$. Other results that `transfer regularity' start with two-dimensional flows: Raugel \& Sell (1993) considered the problem posed on thin three-dimensional domains, and Gallagher (1997) considered flows with initial data that are `close to two dimensional'.

There is, of course, another way to view solving the equations on ${Q_\alpha}$, $\alpha\ge\alpha_0$, with fixed initial data $u_0$ of compact support. Here, rather than keeping $u_0$ fixed and increasing $\alpha$, one could keep the domain fixed and rescale $u_0$: taking $\alpha_0=1$ for simplicity, the problem on ${Q_\alpha}$ becomes a problem posed on $\Omega_1$ by setting
$$
u_0^\alpha(x)=\alpha u_0(\alpha x).
$$
A solution $(u(x,t),p(x,t))$ on ${Q_\alpha}$ becomes the rescaled solution $$(\alpha u(\alpha x,\alpha^2t),\alpha^2 p(\alpha x,\alpha^2t))$$ on $Q_1$. However, if the solution on ${Q_\alpha}$ exists for $t\in[0,T]$, then the rescaled solution on $Q_1$ exists only for $t\in[0,T/\alpha^2]$. It follows that such a rescaling is not a useful tool for considering the behaviour of solutions as $\alpha\to\infty$ in the sense proposed here. Nevertheless, related scaling ideas are used here to check that various inequalities hold with constants independent of the domain parameter $\alpha$.

\section{Preliminaries}

The expression $L^2(Q_\alpha)$ denotes the space of functions that are $2\alpha$-periodic in every direction, with
$$
\int_{Q_\alpha}|u|^2<\infty,
$$
where $Q_\alpha=(-\alpha,\alpha)^3$. Throughout the paper, a dot over a space denotes that the functions have zero average: so, for example, $\dot L^2(Q_\alpha)$ denotes that subset of $L^2(Q_\alpha)$ consisting of those functions that also satisfy the condition
\begin{equation}\label{zero-average}
\int_{{Q_\alpha}}u=0.
\end{equation}
The notation $\<f,g\>_{L^2(Q_\alpha)}=\int_{Q_\alpha} f(x)g(x)\,\d x$ is used for the inner product in $L^2(Q_\alpha)$.

The space of $2\alpha$-periodic functions with weak derivatives up to order $s$ in $L^2(Q_\alpha)$, again satisfying \eqref{zero-average}, is denoted by $\dot H^s({Q_\alpha})$. Due to the zero-average condition, the $\dot H^s(Q_\alpha)$ norm defined by setting
$$
\|u\|_{\dot H^s(Q_\alpha)}:=\left(\sum_{|\gamma|=s}\|\partial^\gamma u\|_{L^2(Q_\alpha)}^2\right)^{1/2}
$$
is equivalent to the full $H^s(Q_\alpha)$ norm. Indeed, for all $r\ge s\ge 0$ the generalised Poincar\'e inequality
$$
\|u\|_{\dot H^s(Q_\alpha)}\le C_{r,s}\alpha^{r-s}\|u\|_{\dot H^r(Q_\alpha)},\qquad u\in\dot H^r(Q_\alpha),
$$
holds, from which the equivalence follows.

Note also for later use that if $\Delta u\in L^2(Q_\alpha)$ then $u\in H^2(Q_\alpha)$ with
$$
\sum_{i,j=1}^3\|\partial_i\partial_ju\|_{H^2(Q_\alpha)}^2\le 9\|\Delta u\|_{L^2(Q_\alpha)}^2,
$$
since for any $f\in C^\infty(Q_1)$ with $f=\sum_{k\in\Z^3}\hat f_k\e^{\ri k\cdot x}$
  \begin{equation}\label{D2dd}
  \|\partial_i\partial_j f\|_{L^2(Q_1)}^2=\sum_{k\in\Z^3} |k_ik_j|^2|\hat f_k|^2\le \sum_{k\in\Z^3}|k|^4|\hat f_k|^2= \|\Delta f\|_{L^2(Q_1)}^2.
  \end{equation}

The notation $\dot C^\infty({Q_\alpha})$ denotes the space of all $C^\infty$ $2\alpha$-periodic functions satisfying the same zero average condition, and $\dot C_\sigma^\infty(Q_\alpha)$ the space of all smooth divergence-free functions in $\dot C^\infty(Q_\alpha)$. The space $\dot C_{c,\sigma}^\infty(\R^3)$ is the space of all smooth, compactly-supported, divergence-free functions defined on $\R^3$, with zero integral over $\R^3$. The space $\dot L^p_\sigma(Q_\alpha)$ is the completion of $\dot C_\sigma^\infty(Q_\alpha)$ in $L^p(Q_\alpha)$; similarly $\dot L^p_\sigma(\R^3)$ is the completion of $\dot C_{c,\sigma}^\infty(\R^3)$ in $L^p(\R^3)$. Throughout, the $\sigma$ subscript indicates that the functions are divergence free.

Note that $\dot C_\sigma^\infty(Q_\alpha)$ is dense in $\dot H^1_\sigma(Q_\alpha)$ and $\dot C_{c,\sigma}^\infty(\R^3)$ is dense in $H^1_\sigma(\R^3)$. The second of these two is less obvious so the proof is given here.

\begin{lemma}\label{surprise!}
$\dot C_{c,\sigma}^\infty(\R^3)$ is dense in $H^1_\sigma(\R^3)$.
\end{lemma}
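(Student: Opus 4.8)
The first thing to notice is that the dot in $\dot C^\infty_{c,\sigma}(\R^3)$ is automatic: if $v\in C^\infty_c(\R^3)$ is divergence free then $v_i=\partial_j(x_iv_j)$, so $\int_{\R^3}v_i=0$ by the divergence theorem. Hence it suffices to show that every $u\in H^1_\sigma(\R^3)$ can be approximated in $H^1(\R^3)$ by smooth, compactly supported, divergence-free fields. The plan is the standard ``truncate, then mollify'' procedure; the only real difficulty is that multiplying $u$ by a cut-off destroys the constraint $\nabla\cdot u=0$, and I would get around this by truncating a \emph{vector potential} for $u$ rather than $u$ itself.

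Step 1 (vector potential). Given $u\in H^1_\sigma(\R^3)$, let $\psi$ be the vector field with $\hat\psi(\xi)=\ri\xi\times\hat u(\xi)/|\xi|^2$ (morally $\psi=\nabla\times(-\Delta)^{-1}u$); since $|\hat\psi|\le|\hat u|/|\xi|$ is locally integrable and no worse than $|\hat u|\in L^2$ at infinity, $\psi$ is a well-defined tempered distribution. From $\ri\xi\times(\ri\xi\times\hat u)=\ri\xi\,(\ri\xi\cdot\hat u)+|\xi|^2\hat u$ and $\ri\xi\cdot\hat u=\widehat{\nabla\cdot u}=0$ one gets $\nabla\times\psi=u$. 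Moreover $|\widehat{\partial_k\psi}(\xi)|=|\xi_k|\,|\hat\psi(\xi)|\le|\hat u(\xi)|$, so $\nabla\psi\in L^2(\R^3)$ and therefore $\psi\in\dot H^1(\R^3)\hookrightarrow L^6(\R^3)$; this global integrability of $\psi$ and $\nabla\psi$ is exactly what the truncation step will use.

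Step 2 (truncation). Fix $\chi\in C^\infty_c(B_2)$ with $\chi\equiv1$ on $B_1$ and put $\chi_R(x):=\chi(x/R)$. Define
\[
w_R:=\nabla\times(\chi_R\psi)=\chi_R u+(\nabla\chi_R)\times\psi .
\]
Each $w_R$ is divergence free (being a curl), compactly supported, and lies in $H^1(\R^3)$, and I claim $w_R\to u$ in $H^1$. The term $\chi_R u$ tends to $u$ in $H^1$ in the usual way, using $u\in H^1$ and $|\nabla^k\chi_R|\le C_kR^{-k}$. The error $e_R:=(\nabla\chi_R)\times\psi$ and all the terms making up $\nabla e_R$ are supported in $\{R\le|x|\le 2R\}$; estimating the cut-off derivatives by powers of $1/R$ and applying H\"older with the volume bound $|\{R\le|x|\le 2R\}|\le CR^3$, one bounds them by multiples of $\|\psi\|_{L^6(R\le|x|\le2R)}$ and $\|\nabla\psi\|_{L^2(R\le|x|\le2R)}$, both of which vanish as $R\to\infty$ since $\psi\in L^6(\R^3)$ and $\nabla\psi\in L^2(\R^3)$. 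This is the step I expect to require the most care, and it is the reason for the detour through $\psi$; an alternative would be to truncate $u$ directly and correct the resulting (small) divergence by a Bogovskii-type operator on the annulus, checking by a scaling argument that its operator norm can be taken independent of $R$.

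Step 3 (mollification and a diagonal argument). For a standard mollifier $\rho_\eta$, the field $w_R*\rho_\eta$ is smooth, compactly supported, divergence free (convolution commutes with $\nabla\cdot$), has zero integral over $\R^3$ (by the observation at the start, or because $w_R=\nabla\times(\chi_R\psi)$ is a compactly supported curl), and converges to $w_R$ in $H^1(\R^3)$ as $\eta\to0$. Given $\delta>0$, choosing first $R$ with $\|w_R-u\|_{H^1}<\delta/2$ and then $\eta$ with $\|w_R*\rho_\eta-w_R\|_{H^1}<\delta/2$ produces an element of $\dot C^\infty_{c,\sigma}(\R^3)$ within $\delta$ of $u$, which completes the proof.
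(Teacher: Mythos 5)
Your proof is correct, but it takes a genuinely different route from the paper. The paper simply cites Heywood (1976) for the density of $C_{c,\sigma}^\infty(\R^3)$ in $H^1_\sigma(\R^3)$ and then spends its effort adjusting the mean: it subtracts a rescaled divergence-free bump $\psi_M(x)=MR^{-3}\psi(x/R)$ whose $H^1$ norm is made small by taking $R$ large. You instead (i) observe that the zero-average condition is automatic, since $\nabla\cdot v=0$ and compact support give $v_i=\partial_j(x_iv_j)$ and hence $\int_{\R^3}v_i=0$, so that $\dot C_{c,\sigma}^\infty(\R^3)=C_{c,\sigma}^\infty(\R^3)$; and (ii) give a self-contained proof of the Heywood density statement by truncating a vector potential $\psi$ with $\nabla\times\psi=u$, using $\psi\in L^6$ and $\nabla\psi\in L^2$ to kill the commutator term $(\nabla\chi_R)\times\psi$ on the annulus, and then mollifying. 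Both steps check out: the Fourier computation giving $\nabla\times\psi=u$, the local integrability of $\hat\psi$ near the origin, and the H\"older estimates on the annulus are all sound. What each approach buys: the paper's argument is two lines modulo the citation, while yours is longer but removes the external dependence entirely. More interestingly, your observation (i) shows that the paper's correction step is vacuous --- $M=\int\phi$ is necessarily $0$ for $\phi\in C_{c,\sigma}^\infty(\R^3)$, and indeed no $\psi\in C_{c,\sigma}^\infty(\R^3)$ with $\int_{\R^3}\psi=1$ can exist --- so your one-line remark is arguably the cleanest possible proof of the lemma given Heywood's result, and the rest of your argument is a bonus proof of that result itself.
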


\begin{proof}
 The density of $C_{c,\sigma}^\infty(\R^3)$ in $H^1_\sigma(\R^3)$ is due to Heywood (1976):  so given any $u\in H^1_\sigma(\R^3)$ and $\eps>0$, there exists $\phi\in C_{c,\sigma}^\infty(\R^3)$ such that $\|u-\phi\|_{H^1(\R^3)}<\eps/2$.

  Set $M=\int_{\R^3}\phi(x)\,\d x$ and choose any $\psi\in C_{c,\sigma}^\infty(\R^3)$ with $\int_{\R^3}\psi(x)\,\d x=1$. Setting $\psi_M(x):=MR^{-3}\psi(x/R)$ yields a $\psi_M\in C_{c,\sigma}^\infty(\R^3)$ with
  $$
  \int_{\R^3}\psi_M=M,\qquad\int_{\R^3}|\psi_M|^2=\frac{M^2}{R^3},\qquad\mbox{and}\qquad\int_{\R^3}|\nabla\psi_M|^2=\frac{M^2}{R^5}.
  $$
    Now choose $R$ sufficiently large that $\|\psi_M\|_{H^1}^2=M^2R^{-3}+M^2R^{-5}<\eps^2/4$; setting $\tilde u=\phi-\psi_M$ gives $\tilde u\in\dot C_{c,\sigma}^\infty(\R^3)$ with $\|u-\tilde u\|_{H^1(\R^3)}<\eps$.
\end{proof}

At various points it is important that the constants in inequalities valid on $Q_\alpha$ do not depend on $\alpha$, i.e.\ on the size of the domain. To ensure this, inequalities are shown on $Q_1$ and then rescaled: given a function $f_\alpha$ defined on $Q_\alpha$, the rescaled function $f(x)=f_\alpha(\alpha x)$ is defined on $Q_1$. The $L^p$ norms of derivatives of order $k$ then scale according to
\begin{equation}\label{rescaling}
\|\partial^\gamma f_\alpha\|_{L^p(Q_\alpha)}=\alpha^{(3/p)-k}\|\partial^\gamma f\|_{L^p(Q_1)},\qquad\mbox{where}\qquad |\gamma|=k.
\end{equation}

\section{Convergence of velocities corresponding to compactly-supported vorticity}\label{sec:vorticity}

\subsection{Reconstruction of $u$ from $\omega$}

One of the issues for the convergence results considered
here is to identify a class of initial data that is `localised' in a reasonable
way. One possible choice (although Theorem \ref{strong} is more general)
is to take a compactly supported vorticity $\omega$ and to consider the
corresponding velocity fields obtained by `inverting' the curl operator on the corresponding domain. This amounts to solving the equations
\begin{equation}\label{div-curl}
\curl\,u=\omega,\qquad\nabla\cdot u=0;
\end{equation}
by taking the curl of both equations and using the vector identity
$$
\curl\,\curl\,u=\nabla(\nabla\cdot u)-\Delta u=-\Delta u
$$
it follows that
$$
-\Delta u=\curl\,\omega\qquad\Rightarrow\qquad u=(-\Delta)^{-1}\curl\,\omega;
$$
the weak form of this system is: given $\omega\in\dot L^2_\sigma(\Omega)$,
\begin{equation}\label{div-curl-weak}
\mbox{find }u\in \dot H^1_\sigma(\Omega)\quad \mbox{s.t.}\quad \<\nabla u,\nabla\phi\>_{L^2(\Omega)}=\<\omega,\curl\,\phi\>_{L^2(\Omega)}\qquad\forall\ \phi\in \dot H^1_\sigma(\Omega),
\end{equation}
for $\Omega=Q_\alpha$, and replacing $\dot H^1_\sigma$ with in $H^1_\sigma$ (i.e.\ relaxing the zero average condition) on $\R^3$. Note the integration by parts in the right-hand side from $\<\curl\,\omega,\phi\>$, which allows for $\omega\in L^2$ and not only $\omega\in H^1$.


On the whole space, an expression for $u$ can be obtained using the fundamental solution of the Laplacian and an integration by parts, namely the Biot--Savart Law
\begin{equation}\label{BSL}
u=\curl^{-1}\omega:=-\frac{1}{4\pi}\int_{\R^3}\frac{x-y}{|x-y|^3}\times\omega(y)\,\d y;
\end{equation}
for $\omega\in L^{6/5}(\R^3)\cap L^2_\sigma(\R^3)$ this is the unique solution in $H^1_\sigma(\R^3)$ of \eqref{div-curl-weak}.

[In the case of $\R^2$ modified versions of the equivalent to the Biot--Savart Law are available which do not require decay of $\omega$ and $u$ at infinity, see Serfati (1995) and Ambrose et al.\ (2015), for example. For bounded domains see Enciso, Garc\'\i a-Ferrero, \& Peralta-Salas (2018), for example.]

On periodic domains, while $u_\alpha=\curl_\alpha^{-1}\omega$ can be written explicitly in terms of the Fourier expansion
 it will be more useful here to observe that $u_\alpha$ is still the solution of the equation $-\Delta u_\alpha=\curl\,\omega$.

On the periodic domain $Q_1$, if $\int_{Q_1}g=0$, then the equation $-\Delta u=g$, $\int_{Q_1}u=0$, has a solution given in the form
$$
u(x)=\int_{Q_1} K_Q(x,y)g(y)\,\d y,\quad\mbox{with}\quad K_Q(x,y)=\frac{1}{|x-y|}\phi(x-y)+S(x,y),
$$
where $\phi$ and $S$ are smooth and $\phi(z)=1$ for $|z|<1/10$ and $\phi(z)=0$ for $|z|>1/4$, see Theorem C.5 in Robinson et al.\ (2016), for example. Then, when $\omega$ has compact support in $Q_1$, \begin{align}
u(x)&=\int_{Q_1}\left[\frac{1}{|x-y|}\phi(x-y)+S(x,y)\right][\curl\,\omega](y)\,\d y\notag\\
&=\int_{Q_1}\frac{1}{|x-y|}\phi(x-y)[\curl\,\omega](y)\,\d y+\int_{Q_1}S(x,y)[\curl\,\omega](y)\,\d y\notag\\
&=\int_{Q_1}\curl_y\left(\frac{1}{|x-y|}\phi(x-y)\right)\omega(y)\,\d y+\int_{Q_1}[\curl_yS](x,y)\omega(y)\,\d y\notag\\
&=-\int_{Q_1}\phi(x-y)\frac{x-y}{|x-y|^3}\times\omega(y)\,\d y+\int_{Q_1}\frac{1}{|x-y|}\nabla\phi(x-y)\times\omega(y)\,\d y\notag\\
&\qquad\qquad+\int_{Q_1}[\curl_yS](x,y)\omega(y)\,\d y.\label{BS-Qa}
\end{align}

\subsection{Bounds on $u$ from bounds on $\omega$}

The following result is extremely useful; it is valid on $Q_\alpha$ for every $\alpha$ and on $\R^3$. While a similar inequality could be obtained using the Calder\'on--Zygmund Theorem and \eqref{BSL}, equality follows here from a much simpler argument (see equation (1.4.20) in Doering \& Gibbon, 1995).

\begin{lemma}\label{lem:w=Du} If $u\in H^1_\sigma$ and $\omega={\rm curl}\,u\in L^2$ then $\|\nabla u\|_{L^2}=\|\omega\|_{L^2}$.
\end{lemma}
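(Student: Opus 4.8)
The plan is to establish the identity first for smooth divergence-free fields, where the necessary integration by parts is unambiguous, and then to pass to the general case by density.

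First I would take $u$ smooth: either $u\in\dot C^\infty_\sigma(Q_\alpha)$ in the periodic case, or $u\in C^\infty_{c,\sigma}(\R^3)$ on the whole space. Expanding $|\curl\,u|^2$ componentwise and integrating by parts in each of the three cross terms (for instance $\int(\partial_2u_3)(\partial_3u_2)=\int(\partial_2u_2)(\partial_3u_3)$, with no boundary contribution because of periodicity, respectively compact support) yields the algebraic identity
\begin{equation}\label{curlgrad}
\|\curl\,u\|_{L^2}^2=\|\nabla u\|_{L^2}^2-\|\nabla\cdot u\|_{L^2}^2,
\end{equation}
valid for any smooth $u$; equivalently this is $\<\curl\,u,\curl\,u\>=\<u,\curl\,\curl\,u\>=\<u,-\Delta u+\nabla(\nabla\cdot u)\>$, using the vector identity already recorded above. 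Imposing $\nabla\cdot u=0$ reduces \eqref{curlgrad} to $\|\curl\,u\|_{L^2}=\|\nabla u\|_{L^2}$.

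Next I would remove the smoothness assumption. Given $u\in H^1_\sigma$, the density of $\dot C^\infty_\sigma(Q_\alpha)$ in $\dot H^1_\sigma(Q_\alpha)$ (respectively of $C^\infty_{c,\sigma}(\R^3)$ in $H^1_\sigma(\R^3)$, which follows from Heywood (1976) and is in any case contained in Lemma \ref{surprise!}) provides smooth divergence-free $u_n$ with $u_n\to u$ in $H^1$; note that adding a constant, if relevant on $Q_\alpha$, changes neither $\nabla u$ nor $\curl\,u$. Then both $\nabla u_n\to\nabla u$ and $\curl\,u_n\to\curl\,u$ in $L^2$, since $\nabla$ and $\curl$ are bounded operators from $H^1$ to $L^2$; letting $n\to\infty$ in $\|\nabla u_n\|_{L^2}=\|\curl\,u_n\|_{L^2}$ gives the claim. (Observe that when $u\in H^1$ the hypothesis $\omega=\curl\,u\in L^2$ is automatic, so the real content of the lemma is the \emph{exact} equality of the two norms.)

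I do not anticipate a genuine obstacle here. The only points needing a moment's care are that the integration by parts in the first step produces no boundary terms — immediate from periodicity on $Q_\alpha$ and from compact support on $\R^3$ — and that the constant in the resulting identity is exactly $1$, independently of $\alpha$, which is precisely why the elementary route via \eqref{curlgrad} is preferable to invoking the Calder\'on--Zygmund theorem through the Biot--Savart kernel \eqref{BSL}.
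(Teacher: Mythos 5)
Your proof is correct and takes essentially the same route as the paper: the paper also first establishes the pointwise-integrated identity for smooth $u$ (via the Levi--Civita contraction $\epsilon_{ijk}\epsilon_{ilm}=\delta_{jl}\delta_{km}-\delta_{jm}\delta_{kl}$ rather than your componentwise expansion, but the two integrations by parts and the use of $\nabla\cdot u=0$ are identical in substance), and then passes to general $u\in H^1_\sigma$ by smooth approximation (mollification in the paper, density of smooth divergence-free fields in your version — the same mechanism).
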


\begin{proof}
Assume first that $u$ is smooth and $\omega\in L^2$. Then, since $\omega_i=\epsilon_{ijk}\partial_ju_k$ and $\epsilon_{ijk}\epsilon_{ilm}=\delta_{jl}\delta_{km}-\delta_{jm}\delta_{kl}$,
\begin{align*}
  \int|\omega|^2&=\int \epsilon_{ijk}(\partial_ju_k)\epsilon_{ilm}(\partial_lu_m)\\
  &=\int[\delta_{jl}\delta_{km}-\delta_{jm}\delta_{kl}](\partial_ju_k)(\partial_lu_m)\\
  &=\int(\partial_ju_k)(\partial_ju_k)-(\partial_ju_k)(\partial_ku_j)=\int\sum_{j,k}|\partial_ju_k|^2,
\end{align*}
integrating by parts twice in the final term and using the fact that $u$ is divergence free. Now if $u\in H^1$, $\omega\in L^2$ and mollifying $u$ produces a smooth $u_\eps$ with $\nabla\times u_\eps\in L^2$; the same argument shows that since $\omega_\eps\to\omega$, $\partial_i(u_\eps)_j\to\partial_iu_j$ for every $i,j$, yielding the same equality for these more general $u$.\end{proof}

The Biot--Savart Law and Young's inequality provide $L^q$ estimates on $u$ given $L^p$ bounds on $\omega$.

\begin{lemma}\label{lem:w-unif}
  Suppose that $\omega\in L_\sigma^{p}({\R^3})$ for some $p\in(1,3)$. Then, for
  $$
  \frac{1}{q}=\frac{1}{p}-\frac{1}{3},
  $$
 $u=\curl^{-1}\omega\in L_\sigma^q({\R^3})$ with
   \begin{equation}\label{w2u-R3}
  \|u\|_{L^q({\R^3})}\le C_p\|\omega\|_{L^p({\R^3})}.
  \end{equation}
  The same estimate also holds when $\omega\in \dot L_\sigma^p(Q_\alpha)$: $u_\alpha=\curl_\alpha^{-1}\omega\in \dot L^q_\sigma(Q_\alpha)$ with
    \begin{equation}\label{w2u-Qa}
  \|u_\alpha\|_{L^q({Q_\alpha})}\le C_p\|\omega\|_{L^p({Q_\alpha})},
  \end{equation}
  where $C_p$ is independent of $\alpha$.
  \end{lemma}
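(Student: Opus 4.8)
On $\R^3$ the estimate \eqref{w2u-R3} is immediate from the explicit kernel in the Biot--Savart Law \eqref{BSL}: writing $u=K*\omega$ with $K(z)=-\frac1{4\pi}\,z/|z|^3$, one has $|K(z)|=\frac1{4\pi}|z|^{-2}$, so $K\in L^{3/2,\infty}(\R^3)$ with $\|K\|_{L^{3/2,\infty}}$ an absolute constant. Since $1<p<3$ the exponent $q$ with $1/q=1/p-1/3$ satisfies $1<q<\infty$ and $1+1/q=1/p+2/3$, so Young's inequality in its weak (Hardy--Littlewood--Sobolev) form gives $\|u\|_{L^q(\R^3)}\le C\|K\|_{L^{3/2,\infty}}\|\omega\|_{L^p(\R^3)}$, which is \eqref{w2u-R3}. (Equivalently, $\omega\mapsto u$ is a first-order Riesz potential, for which the mapping $L^p\to L^q$ with $1/q=1/p-1/3$ is the standard Sobolev-type bound on $\R^3$.)

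For the periodic estimate \eqref{w2u-Qa} I would first reduce from $Q_\alpha$ to $Q_1$ by scaling, and then treat the three explicit terms in \eqref{BS-Qa}. Given $\omega\in\dot L^p_\sigma(Q_\alpha)$ and $u_\alpha=\curl_\alpha^{-1}\omega$ (interpreted via the Green's-function representation used below), put $\omega^{(1)}(x):=\omega(\alpha x)$ and $u^{(1)}(x):=\alpha^{-1}u_\alpha(\alpha x)$; then $\omega^{(1)}\in\dot L^p_\sigma(Q_1)$ and $-\Delta u^{(1)}=\curl\,\omega^{(1)}$ on $Q_1$, i.e.\ $u^{(1)}=\curl_1^{-1}\omega^{(1)}$. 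Granting the $Q_1$ bound $\|u^{(1)}\|_{L^q(Q_1)}\le C_p\|\omega^{(1)}\|_{L^p(Q_1)}$, the scaling relation \eqref{rescaling} gives $\|u_\alpha\|_{L^q(Q_\alpha)}=\alpha^{1+3/q}\|u^{(1)}\|_{L^q(Q_1)}$ and $\|\omega\|_{L^p(Q_\alpha)}=\alpha^{3/p}\|\omega^{(1)}\|_{L^p(Q_1)}$, whence $\|u_\alpha\|_{L^q(Q_\alpha)}\le C_p\,\alpha^{1+3/q-3/p}\|\omega\|_{L^p(Q_\alpha)}$; and $1+3/q-3/p=0$ precisely because $1/q=1/p-1/3$. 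This exact cancellation of the scaling powers is what forces $C_p$ in \eqref{w2u-Qa} to be independent of $\alpha$, and it is worth isolating.

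It remains to prove the $Q_1$ estimate. From $K_Q(x,y)=|x-y|^{-1}\phi(x-y)+S(x,y)$ and an integration by parts (on the torus there are no boundary terms, as noted after \eqref{div-curl-weak}, so only $\omega\in L^p$ is needed) $u^{(1)}$ is the sum of the three integrals displayed in \eqref{BS-Qa}. In the second of these the kernel $|x-y|^{-1}\nabla\phi(x-y)$ vanishes for $|x-y|<1/10$ and for $|x-y|>1/4$, so it is bounded; in the third, $[\curl_yS](x,y)$ is bounded because $S$ is smooth. As $Q_1$ is a bounded domain, H\"older's inequality then bounds each of these two contributions in $L^\infty(Q_1)$, hence in $L^q(Q_1)$, by $C\|\omega^{(1)}\|_{L^p(Q_1)}$ with an absolute $C$. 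In the first integral the kernel $\phi(z)\,z/|z|^3$ is supported in $\{|z|<1/4\}$ and dominated pointwise by $|z|^{-2}$; since $1/4$ is small compared with the period there is no interference from the periodic images, so for $x\in Q_1$ this term is a genuine (whole-space) convolution of the periodic extension of $\omega^{(1)}$ with a kernel in $L^{3/2,\infty}(\R^3)$, and the argument of the first paragraph --- together with periodicity, which controls the $L^p$ norm of that extension over any fixed ball by a multiple of $\|\omega^{(1)}\|_{L^p(Q_1)}$ --- bounds it in $L^q(Q_1)$ by $C_p\|\omega^{(1)}\|_{L^p(Q_1)}$. Summing the three bounds gives the $Q_1$ estimate. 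The whole-space case is essentially immediate; the points needing care are the short-range treatment of the first term on the torus (ensuring the periodic kernel really behaves there like the $\R^3$ Biot--Savart kernel) and, for the conclusion that $C_p$ is $\alpha$-independent, the bookkeeping of scaling exponents in the second paragraph.
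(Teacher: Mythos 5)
Your proof is correct and follows essentially the same route as the paper: weak Young's inequality with the $L^{3/2,\infty}$ Biot--Savart kernel on $\R^3$, the three-term decomposition \eqref{BS-Qa} on $Q_1$, and the rescaling \eqref{rescaling} for $\alpha$-independence (your explicit check that $1+3/q-3/p=0$ is exactly the cancellation the paper invokes). The only cosmetic difference is that you bound the second and third terms of \eqref{BS-Qa} in $L^\infty(Q_1)$ via H\"older, where the paper uses Young's inequality with an $L^{3/2}$ kernel and Minkowski's inequality respectively; both are fine.
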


\begin{proof}
On the whole space $u$ is given by \eqref{BSL}. So $u$ is given by the convolution of $\omega$ with a kernel of order $|x|^{-2}$; in three dimensions this belongs to the weak Lebesgue space $L^{3/2,\infty}$, and \eqref{w2u-R3} follows using the weak-Lebesgue space version of Young's inequality,
$$
\|f\star g\|_{L^q}\le C_{p,q,r}\|f\|_{L^{r,\infty}}\|g\|_{L^p},\qquad 1+\frac{1}{q}=\frac{1}{r}+\frac{1}{p},\qquad 1<p,q,r<\infty.
$$

For the same bound on $Q_1$, consider the expression in \eqref{BS-Qa},
\begin{align*}
u(x)&=-\int_{Q_1}\phi(x-y)\frac{x-y}{|x-y|^3}\times\omega(y)\,\d y+\int_{Q_1}\frac{1}{|x-y|}\nabla\phi(x-y)\times\omega(y)\,\d y\\
&\qquad\qquad+\int_{Q_1}[\curl_yS](x,y)\omega(y)\,\d y.
\end{align*}
The kernel in the first term is once again in $L^{3/2,\infty}(Q_1)$ and the kernel in the second term is in $L^{3/2}(Q_1)$; these two terms are thus bounded in $L^q(Q_1)$ using Young's inequality. For the final term $u_3(x)$, Minkowski's inequality yields
$$
\|u_3\|_{L^q(Q_1)}\le\int_{Q_1}\|\curl_yS(\cdot,y)\|_{L^q(Q_1)}|\omega(y)|\,\d y.
$$
Noting that $S$ is smooth and that only $x,y\in Q_1$ are relevant, the bound $\|\curl_yS_\alpha(\cdot,y)\|_{L^q(Q_1)}\le M$ holds, and hence
$$
\|u_3\|_{L^q(Q_1)}\le M\int_{Q_1}|\omega(y)|\,\d y\le M\|\omega\|_{L^1(Q_1)}\le M_p\|\omega\|_{L^p(Q_1)},
$$
using H\"older's inequality and the fact that $Q_1$ is bounded.

These three upper bounds combine to yield \eqref{w2u-Qa} on $Q_1$. The fact that the same inequality holds with a constant independent of $\alpha$ follows since both norms in \eqref{w2u-Qa} behave the same way under the rescaling $x\mapsto\alpha x$, see \eqref{rescaling}.\end{proof}
%

\subsection{Extension of functions from ${Q_\alpha}$ to $\R^3$}

Given $\omega\in\dot L^2_\sigma(\R^3)$ with support contained in $Q_{\alpha_0}$, Lemma \ref{lem:w-unif} gives a family $\{u_\alpha\}_{\alpha\ge\alpha_0}$ of velocity fields defined on $Q_\alpha$ ($\alpha\ge\alpha_0$). In order to be able to take a meaningful limit on the whole of $\R^3$, each $u_\alpha$ will be extended to the whole of $\R^3$ in such a way that the support of $\tilde u_\alpha$ is contained in a domain only slightly larger than ${Q_\alpha}$.

Given $u_\alpha\in L^2({Q_\alpha})$, denote by $\tilde u_\alpha$ the extension of $u_\alpha$ to all of $\R^3$ defined by setting
  $$
  \tilde u_\alpha(x)=\psi_\alpha(x)u_\alpha^{\rm p}(x),
  $$
  where $u_\alpha^{\rm p}(x)$ is the periodic extension of $u_\alpha$ to $\R^3$ and $\psi_\alpha\in C_c^\infty(\R^3)$ with $0\le\psi_\alpha\le 1$,
  $$
  \psi_\alpha(x)=\begin{cases}1&x\in(-\alpha,\alpha)^3\\
  0&x\notin(-(\alpha+1),\alpha+1)^3,\end{cases}
  $$
  $|\nabla\psi_\alpha|\le M_1$, and $|\nabla^2\psi_\alpha|\le M_2$, uniformly in $\alpha$.

Bounds on $u_\alpha$ immediately translate to bounds on $\tilde u_\alpha$: in particular, for $\alpha\ge 1$,
  $$
  \|\tilde u_\alpha\|_{L^2(\R^3)}\le e_1\|u\|_{L^2({Q_\alpha})},\qquad\|\nabla\tilde u_\alpha\|_{L^2(\R^3)}\le e_2\|u\|_{H^1({Q_\alpha})},\qquad
  $$
  and
  $$
  \|\tilde u_\alpha\|_{H^2(\R^3)}\le e_3\|u\|_{H^2({Q_\alpha})}
  $$
  [for explicit values of these constants, one can take $e_1=27$, $e_2=\max(26M_1,27)$, and $e_3=\max(27M_2,52M_1,27)$].

  Later a similar extension will be used for time-dependent functions $u_\alpha(x,t)$; in this case
  $$
  \tilde u_\alpha(x,t):=\psi_\alpha(x)u_\alpha^{\rm p}(x,t),
  $$
  with the cut-off function $\psi_\alpha$ being independent of $t$. This means, in particular, that
  $$
  \partial_t\tilde u_\alpha(x,t)=\psi_\alpha(x)[\partial_tu_\alpha]^{\rm p}(x,t),
  $$
  so that bounds on $\partial_t\tilde u_\alpha$ can be deduced from bounds on $\partial_tu_\alpha$ as done for $\tilde u_\alpha$ above.

  \subsection{Convergence of $\curl_\alpha^{-1}\omega$ to $\curl^{-1}\omega$ as $\alpha\to\infty$}

Theorem \ref{wa2w} will show that the fields $\tilde u_\alpha$ from Lemma \ref{lem:w-unif} converge to $u$ strongly in $H^1(\R^3)$ whenever $\omega\in H^1(\R^3)$.   The following lemma (see Leray, 1934, or Lemma 6.34 in O\.za\'nski \& Pooley, 2018) can be used to improve the $L^2$-convergence of $\tilde u_\alpha$ to $u$ on compact subsets of $\R^3$ to convergence on the whole of $\R^3$ by bounding the `tails' of $u_\alpha$ uniformly.

  \begin{lemma}\label{Leray-idea}
  If $\{f_\alpha\}_{\alpha\ge\alpha_0},f\in L^2(\R^3)$; $f_\alpha\to f$ strongly in $L^2(K)$ for every compact subset $K$ of $\R^3$; and for every $\eta>0$ there exist $R(\eta)$ and $\beta(\eta)$ such that
  \begin{equation}\label{Leray}
  \int_{|x|\ge R}|f_\alpha|^2<\eta\qquad\mbox{for all }\alpha\ge\beta,
  \end{equation}
  then $f_\alpha\to f$ in $L^2(\R^3)$.
  \end{lemma}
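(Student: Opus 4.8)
The plan is to prove Lemma \ref{Leray-idea} by a standard $\eps/3$ (or rather $\eta/3$) decomposition: splitting the integral $\int_{\R^3}|f_\alpha-f|^2$ into a contribution from a large ball $B_R$, where strong local convergence applies, and a contribution from the complement $\{|x|\ge R\}$, where the hypothesis \eqref{Leray} controls the tail of $f_\alpha$ uniformly and the integrability of $f$ controls the tail of $f$.

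\begin{proof}
Fix $\eta>0$. Since $f\in L^2(\R^3)$, by dominated convergence there exists $R_1$ such that
$$
\int_{|x|\ge R_1}|f|^2<\frac{\eta}{9}.
$$
By hypothesis \eqref{Leray}, applied with $\eta/9$ in place of $\eta$, there exist $R_2=R(\eta/9)$ and $\beta=\beta(\eta/9)$ such that
$$
\int_{|x|\ge R_2}|f_\alpha|^2<\frac{\eta}{9}\qquad\mbox{for all }\alpha\ge\beta.
$$
Set $R:=\max(R_1,R_2)$ and let $K:=\{x\in\R^3:|x|\le R\}$, a compact set. Since $f_\alpha\to f$ strongly in $L^2(K)$, there exists $\beta'\ge\beta$ such that
$$
\int_{|x|\le R}|f_\alpha-f|^2<\frac{\eta}{9}\qquad\mbox{for all }\alpha\ge\beta'.
$$
Now for any $\alpha\ge\beta'$, using $|f_\alpha-f|^2\le 2|f_\alpha|^2+2|f|^2$ on the region $\{|x|\ge R\}$,
\begin{align*}
\int_{\R^3}|f_\alpha-f|^2
&=\int_{|x|\le R}|f_\alpha-f|^2+\int_{|x|\ge R}|f_\alpha-f|^2\\
&\le\frac{\eta}{9}+2\int_{|x|\ge R}|f_\alpha|^2+2\int_{|x|\ge R}|f|^2\\
&<\frac{\eta}{9}+\frac{2\eta}{9}+\frac{2\eta}{9}=\frac{5\eta}{9}<\eta.
\end{align*}
Since $\eta>0$ was arbitrary, $f_\alpha\to f$ in $L^2(\R^3)$.
\end{proof}

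The only point requiring any care is the order in which the parameters are chosen: $R$ must be fixed \emph{before} invoking the local convergence on $K$, so that $K$ is a fixed compact set, and $R$ must be large enough to handle the tails of \emph{both} $f$ and the $f_\alpha$ simultaneously — the latter being precisely what the uniform-in-$\alpha$ hypothesis \eqref{Leray} provides. I do not anticipate any real obstacle; the constants $9$ and $5/9$ above are of course not sharp and one could equally split into exact thirds. In the applications that follow, the work will all be in verifying the uniform tail bound \eqref{Leray} for the relevant family $\tilde u_\alpha$, not in this elementary lemma.
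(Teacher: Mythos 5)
Your proof is correct: the parameters are chosen in the right order (the radius $R$ is fixed, using both the integrability of $f$ and the uniform tail hypothesis \eqref{Leray}, \emph{before} the local convergence on the compact ball $\overline{B(0,R)}$ is invoked), and the splitting $\int_{\R^3}=\int_{|x|\le R}+\int_{|x|\ge R}$ with $|f_\alpha-f|^2\le 2|f_\alpha|^2+2|f|^2$ on the tail closes the argument. The paper does not write out a proof of this lemma, deferring instead to Leray (1934) and Lemma 6.34 of O\.za\'nski \& Pooley (2018), but your argument is exactly the standard one those references supply, so nothing further is needed.
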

%

The argument that follows obtains bounds on the `tail' of a sequence $u_\alpha\in L^2({Q_\alpha})$; in order to apply Lemma \ref{Leray-idea} the corresponding bounds on $\tilde u_\alpha$ will be needed. Therefore note here that if $u_\alpha\in L^2({Q_\alpha})$ and $R<\alpha-1$ then
  \begin{equation}\label{mest}
  \int_{|x|\ge R}|\tilde u_\alpha|^2\,\d x\le 27\int_{x\in {Q_\alpha}:\ |x|\ge R}|u_\alpha|^2\,\d x,
  \end{equation}
since
$$
\bigcup_{\underline{k}\in\Z^3}B(2\alpha\underline{k},R)\cap{\rm supp}(\tilde u_\alpha)=B(0,R),
$$
i.e.\ the integral on the left-hand side of \eqref{mest} can at most include the `tails' from the periodic cells immediately adjacent to $Q_\alpha$, see Figure \ref{fig:2D} for an illustration of this in the two-dimensional case, where the corresponding constant is 9. [In 2D this can be improved to 4; following a similar idea the constant in the 3D case can be improved to 10.]

\begin{figure}
\begin{centering}
  \includegraphics[scale=0.5]{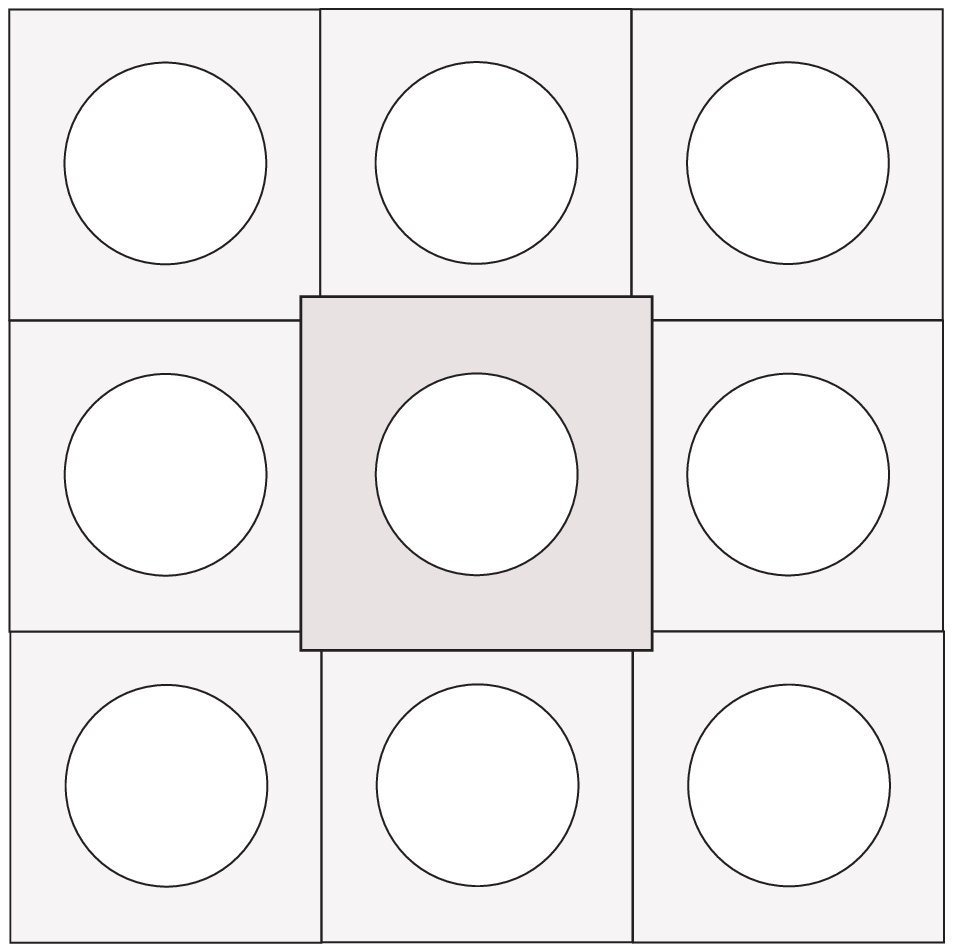}\qquad\includegraphics[scale=0.5]{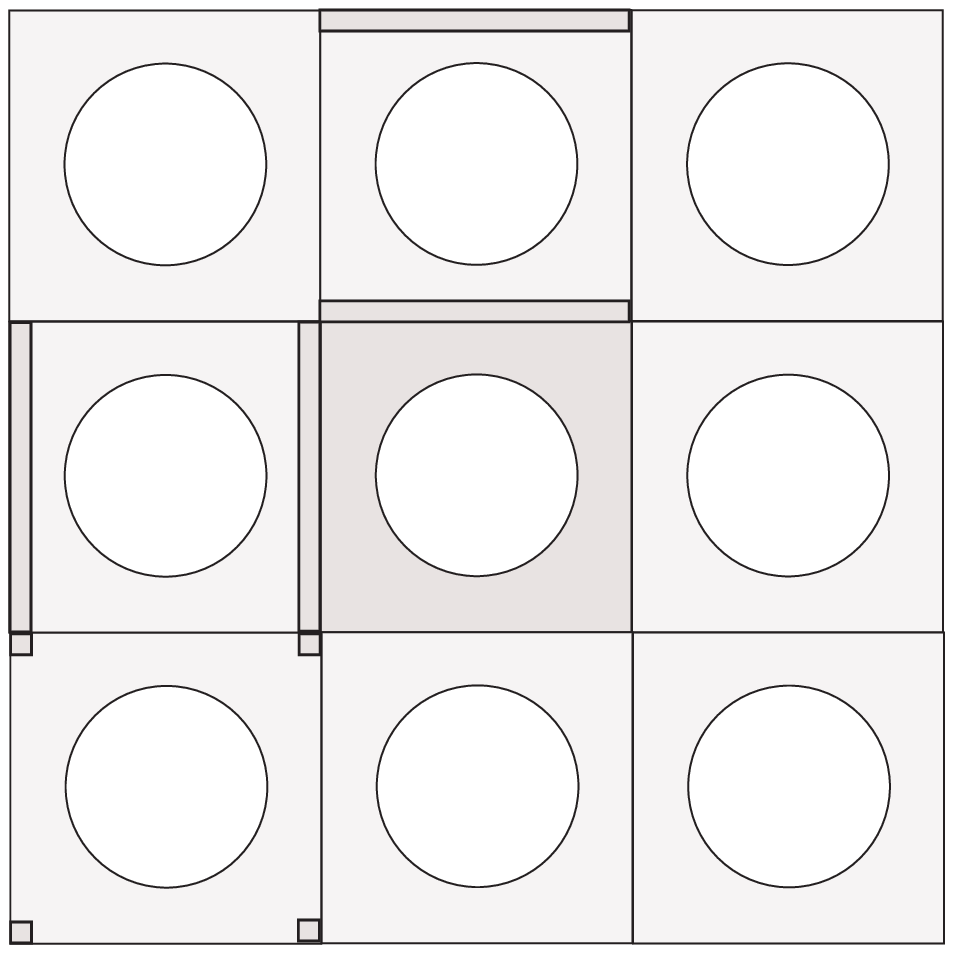}
  \begin{caption}{The support of $\tilde u_\alpha$ is contained in the large central square in the left-hand figure, and $|\tilde u_\alpha|\le|u_\alpha|$ everywhere. Periodised circles of radius $R$ are shown in white. Clearly $\int_{|x|\ge R}|\tilde u_\alpha|^2\le 9\int_{x\in Q_\alpha:\ |x|\ge R}|u_\alpha|^2$. However, with portions of this darker square moved using periodicity (on the right) this can be improved to $\int_{|x|\ge R}|\tilde u_\alpha|^2\le 4\int_{x\in Q_\alpha:\ |x|\ge R}|u_\alpha|^2$. }\end{caption}\label{fig:2D}
  \end{centering}
\end{figure}

\begin{theorem}\label{wa2w}
Suppose that $\omega\in \dot L^2_\sigma(\R^3)$ has compact support. For every $\alpha$ sufficiently large that ${\rm supp}(\omega)\subset Q_\alpha$ define $u_\alpha=\curl_\alpha^{-1}\omega$. Then
\begin{equation}\label{w2u-unif}
\|u_\alpha\|_{L^2}\le C\|\omega\|_{L^{6/5}},\qquad\|\nabla u_\alpha\|_{L^2}=\|\omega\|_{L^2},
\end{equation}
$\tilde u_\alpha\cgw\curl^{-1}\omega$ weakly in $H^1(\R^3)$ and $\tilde u_\alpha\to\curl^{-1}\omega$ strongly in $L^2(K)$ for every compact subset $K$ of $\R^3$.

If in addition $\omega\in H^1(\R^3)$ then $u_\alpha\in H^2(\R^3)$, $\tilde u_\alpha\cgw\curl^{-1}\omega$ weakly in $H^2(\R^3)$, and $\tilde u_\alpha\to\curl^{-1}\omega$ strongly in $H^1(\R^3)$.
\end{theorem}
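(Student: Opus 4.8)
The plan is to obtain bounds on the extensions $\tilde u_\alpha$ that are uniform in $\alpha$, extract a weak limit, identify it as $u:=\curl^{-1}\omega$ using the uniqueness for the div--curl problem on $\R^3$, and finally upgrade local convergence to convergence on all of $\R^3$ by controlling the `tails' of $u_\alpha$ via Lemma~\ref{Leray-idea}.

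The bounds \eqref{w2u-unif} are immediate: Lemma~\ref{lem:w=Du} applied to $u_\alpha$ on $Q_\alpha$ (and to $u$ on $\R^3$) gives $\|\nabla u_\alpha\|_{L^2}=\|\omega\|_{L^2}$, and Lemma~\ref{lem:w-unif} with $p=6/5$ (so $q=2$) gives $\|u_\alpha\|_{L^2}\le C\|\omega\|_{L^{6/5}}$ with $C$ independent of $\alpha$; since $\omega$ has compact support it lies in $L^{6/5}(\R^3)\cap L^2_\sigma(\R^3)$, so $u\in H^1_\sigma(\R^3)$ is well defined. The extension estimates then make $\{\tilde u_\alpha\}$ bounded in $H^1(\R^3)$, so along a subsequence $\tilde u_\alpha\cgw v$ in $H^1(\R^3)$ and, by Rellich--Kondrachov, $\tilde u_\alpha\to v$ in $L^2(K)$ for every compact $K$. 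To see that $v=u$: pairing $\tilde u_\alpha$ with $\nabla\chi$ for $\chi\in C_c^\infty(\R^3)$ and using $\tilde u_\alpha=u_\alpha$ on ${\rm supp}\,\chi$ for large $\alpha$ gives $\langle\tilde u_\alpha,\nabla\chi\rangle=\langle u_\alpha,\nabla\chi\rangle_{L^2(Q_\alpha)}=0$, so $\nabla\cdot v=0$; and for $\phi\in C^\infty_{c,\sigma}(\R^3)$, for $\alpha$ large the identity $-\Delta u_\alpha=\curl\,\omega$ on $Q_\alpha$ gives $\langle\nabla\tilde u_\alpha,\nabla\phi\rangle_{L^2(\R^3)}=\langle\nabla u_\alpha,\nabla\phi\rangle_{L^2(Q_\alpha)}=\langle\curl\,\omega,\phi\rangle_{L^2(\R^3)}=\langle\omega,\curl\,\phi\rangle_{L^2(\R^3)}$, so in the limit $v\in H^1_\sigma(\R^3)$ solves \eqref{div-curl-weak} on $\R^3$ (for all test functions, by the density in Lemma~\ref{surprise!}), whence $v=u$ by the uniqueness recorded after \eqref{BSL}. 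As the limit is independent of the subsequence, the whole family satisfies $\tilde u_\alpha\cgw u$ in $H^1(\R^3)$ and $\tilde u_\alpha\to u$ in $L^2(K)$ for every compact $K$.

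When in addition $\omega\in H^1(\R^3)$, Lemma~\ref{lem:w=Du} applied to the divergence-free $\omega$ gives $\curl\,\omega\in L^2(Q_\alpha)$ with $\|\curl\,\omega\|_{L^2}=\|\nabla\omega\|_{L^2}$, so by the second-derivative estimate accompanying \eqref{D2dd} one has $u_\alpha\in H^2(Q_\alpha)$ with $\|u_\alpha\|_{H^2(Q_\alpha)}\le C\|\omega\|_{H^1}$ uniformly in $\alpha$; hence $\{\tilde u_\alpha\}$ is bounded in $H^2(\R^3)$, its weak $H^2$-limit is again $u$, and by Rellich--Kondrachov $\tilde u_\alpha\to u$ in $H^1(K)$ for every compact $K$. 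To upgrade this to strong convergence in $H^1(\R^3)$ I would apply Lemma~\ref{Leray-idea} to each component of $\tilde u_\alpha$ and of $\nabla\tilde u_\alpha$; by \eqref{mest} (and the analogous estimate for $\nabla\tilde u_\alpha$, whose extra term involving $\nabla\psi_\alpha$ is controlled by the $L^2$-tail of $u_\alpha$ at radius $\sim\alpha$) it suffices to bound $\int_{x\in Q_\alpha,\ |x|\ge R}(|u_\alpha|^2+|\nabla u_\alpha|^2)$ uniformly in $\alpha$ by a quantity that tends to $0$ as $R\to\infty$. For $\nabla u_\alpha$ this is obtained by testing $-\Delta u_\alpha=\curl\,\omega$ against $\chi_R^2u_\alpha$, with $\chi_R$ a radial cut-off vanishing on a ball containing ${\rm supp}\,\omega$ and equal to $1$ outside the doubled ball (so it descends to $Q_\alpha$ for $\alpha>2R$): the right-hand side vanishes and Cauchy--Schwarz gives a bound of order $R^{-2}\|\omega\|_{L^{6/5}}^2$. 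For $u_\alpha$ itself, Sobolev's inequality on $Q_\alpha$ turns this into an $L^6$-tail bound of order $R^{-1}$, and interpolating with the uniform estimate of Lemma~\ref{lem:w-unif} in some $L^p$ with $p<2$ gives the required $L^2$-tail bound (alternatively this can be read off from the periodic Biot--Savart formula \eqref{BS-Qa} rescaled to $Q_\alpha$). Lemma~\ref{Leray-idea} then yields $\tilde u_\alpha\to u$ and $\nabla\tilde u_\alpha\to\nabla u$ in $L^2(\R^3)$, i.e.\ $\tilde u_\alpha\to u$ strongly in $H^1(\R^3)$.

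The main work is in the tail estimates, ensuring the constants are independent of $\alpha$: one has to pass between the periodic geometry and Euclidean balls (it helps that on $Q_\alpha=(-\alpha,\alpha)^3$ the distance on the torus to the origin equals the Euclidean distance, and that the boundary shell of $Q_\alpha$ lies in $\{|x|\gtrsim\alpha\}$), run the Sobolev and energy estimates on $Q_\alpha$ with $\alpha$-independent constants (most cleanly by rescaling to $Q_1$ as in \eqref{rescaling}), and --- if the Biot--Savart route is used for the $L^2$-tail --- track the $\alpha$-dependence of the periodic kernel in \eqref{BS-Qa} (its singular part keeps the form $1/|x-y|$, the cut-off acts at scale $\alpha$, and the smooth remainder is $O(\alpha^{-1})$). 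The remaining ingredients --- weak compactness, elliptic regularity on the torus, and the uniqueness of $\curl^{-1}$ on $\R^3$ --- are routine.
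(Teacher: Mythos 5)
Your proposal is correct and, for most of the theorem, follows the paper's own route: the uniform bounds come from Lemmas \ref{lem:w=Du} and \ref{lem:w-unif}, the weak limit is identified via the weak formulation \eqref{div-curl-weak}, the density result of Lemma \ref{surprise!}, and uniqueness of the $H^1$ solution, and the $H^2$ bounds come from $\|\Delta u_\alpha\|_{L^2}=\|\curl\,\omega\|_{L^2}$ together with \eqref{D2dd}. The gradient-tail estimate is also the same idea: the paper tests against $\varrho_\alpha u_\alpha$ with the Leray ramp \eqref{eq:varrho} and gets a tail of order $(R-r)^{-1}\|\omega\|_{L^{6/5}}\|\omega\|_{L^2}$, while your $\chi_R^2 u_\alpha$ variant gives $O(R^{-2})$ after absorbing; either feeds into Lemma \ref{Leray-idea}. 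The one genuine divergence is how you obtain $\tilde u_\alpha\to u$ in $L^2(\R^3)$. You propose a second application of Lemma \ref{Leray-idea}, which requires an $L^2$ tail bound on $u_\alpha$ itself, built from an $L^6$ tail bound via a Sobolev inequality ``on $Q_\alpha$'' applied to $\chi_R u_\alpha$; note that \eqref{L6ineq} holds with an $\alpha$-independent constant only for zero-average functions, so you must either subtract the mean of $\chi_R u_\alpha$ (which is $O(|Q_\alpha|^{-1/2})$ and harmless) or run the Sobolev step on $\R^3$ for $\chi_R\tilde u_\alpha$ — a small but real detail to close. The paper avoids tail bounds on $u_\alpha$ altogether: having established $\nabla\tilde u_\alpha\to\nabla u$ in $L^2(\R^3)$, it applies the whole-space Sobolev embedding to the \emph{difference} to get $\tilde u_\alpha\to u$ in $L^6(\R^3)$, and then interpolates $\|\tilde u_\alpha-u\|_{L^2}\le\|\tilde u_\alpha-u\|_{L^{8/5}}^{8/11}\|\tilde u_\alpha-u\|_{L^6}^{3/11}$, using the uniform $L^{8/5}$ bound from Lemma \ref{lem:w-unif} with $p=24/23$. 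That route is shorter and dodges the periodic-Sobolev constant issue entirely, at the price of the slightly unusual exponent $24/23$; your route is more uniform in spirit (everything reduces to tails plus Lemma \ref{Leray-idea}) but needs the extra care noted above to be airtight.
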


\begin{proof}
If $\omega\in L^2$ then the uniform estimates for $u_\alpha$ in \eqref{w2u-unif} follow from Lemmas \ref{lem:w=Du} and \ref{lem:w-unif}. Now extend each $u_\alpha$ to a function $\tilde u_\alpha$ defined on all of $\R^3$ as outlined above, and in this way obtain a set of functions with $\tilde u_\alpha$ uniformly bounded (with respect to $\alpha$) in $H^1(\R^3)$. Since $H^1(\R^3)$ is reflexive, it follows from reflexive weak sequential compactness that there exists an element $v\in H^1(\R^3)$ such that $\tilde u_{\alpha_j}\cgw v$ weakly in $H^1(\R^3)$, which in turn implies the strong convergence in $L^2(K)$ for every compact subset $K$ of $\R^3$.

It remains to show that $v=u:=\curl^{-1}\omega$ and that the convergence takes place as $\alpha\to\infty$ and not just for a subsequence.

   To this end, take $\varphi\in\dot C_{c,\sigma}^\infty(\R^3)$. Then, since $\tilde u_{\alpha_j}=u_{\alpha_j}$ on $Q_{\alpha_j}$, once ${\rm supp}(\varphi)\subset Q_{\alpha_j}$ we have
  $$
  \<\nabla\tilde u_{\alpha_j},\nabla\varphi\>_{L^2(\R^3)}=\<\nabla u_{\alpha_j},\nabla\varphi\>_{L^2(Q_{\alpha_j})}=\<\omega,\curl\,\varphi\>_{L^2(\Omega_{\alpha_j})}=\<\omega,\curl\,\varphi\>_{L^2(\R^3)}.
    $$

  Since $\nabla\tilde u_{\alpha_j}\cgw \nabla u$ weakly in $L^2(\R^3)$, for each fixed $\varphi$ it follows that
  $$
  \<\nabla u,\nabla\varphi\>_{L^2(\R^3)}=\<\omega,\curl\,\varphi\>_{L^2(\R^3)}
  $$
  for every $\varphi\in\dot C_{c,\sigma}^\infty(\R^3)$; the equality then holds for every $\varphi\in H^1_\sigma(\R^3)$ by density (see Lemma \ref{surprise!}). Since $u\in H^1_\sigma(\R^3)$ it follows that $u$ is the unique $H^1$ solution of $-\Delta u=\curl\,\omega$, which is precisely $\curl^{-1}\omega$. This also shows that the limit of any convergent subsequence must be the same, and it follows that $u_\alpha\to u$ as claimed in the statement of the theorem.

  If in addition $\omega\in H^1(\R^3)$ then standard elliptic regularity results (see Evans, 2010, for example) gives uniform estimates on $\tilde u_\alpha$ in $H^2(\R^3)$, since then
  $$
  \|\Delta u_\alpha\|_{L^2(Q_\alpha)}=\|\curl\,\omega\|_{L^2(Q_\alpha)}
  $$
  and this yields a bound on the other second derivatives, see \eqref{D2dd}. The weak convergence in $H^2(\R^3)$ now follows since $H^2$ is reflexive, which implies the strong convergence in $H^1(K)$ for every compact subset $K$ of $\R^3$.

To improve this to strong convergence in $H^1(\R^3)$, take $\phi=u_\alpha\varrho_\alpha$ as the test function in
$$
\<\nabla u_\alpha,\nabla\phi\>=\<\curl\,\omega,\phi\>
$$
(cf.\ \eqref{div-curl-weak}),
where $\varrho_\alpha$ is the restriction of
  \begin{equation}\label{eq:varrho}
  \varrho=\begin{cases}0&|x|<r\\
  \frac{|x|-r}{R-r}&r\le |x|\le R\\
  1&|x|>R,\end{cases}
  \end{equation}
  to ${Q_\alpha}$, where we take $0<r<R<\alpha$; 
  note that
  $$
  |\nabla\varrho_\alpha|=\begin{cases}0&|x|<r\\\frac{1}{R-r}&r<|x|<R\\0&|x|>R.\end{cases}
  $$
Therefeore
  $$
   \int_{{Q_\alpha}}|\nabla u_\alpha|^2\varrho_\alpha=-\int_{{Q_\alpha}}(\nabla u_\alpha)\cdot(\nabla\varrho_\alpha)u_\alpha+\int_{{Q_\alpha}} (\curl\,\omega)u_\alpha\varrho_\alpha,
   $$
and taking $r$ sufficiently large that ${\rm supp}(\omega)\subset B(0,r)$ yields
   \begin{align*}
   \int_{x\in {Q_\alpha}:\ |x|\ge R}|\nabla u_\alpha|^2&\le\frac{1}{R-r}\|\nabla u_\alpha\|_{L^2({Q_\alpha})}\|u_\alpha\|_{L^2({Q_\alpha})}\\
   &\le\frac{K}{R-r}\|\omega\|_{L^{6/5}}\|\omega\|_{L^2}.
   \end{align*}
  Lemma \ref{Leray-idea} now guarantees that $\nabla\tilde u_\alpha\to\nabla u$ in $L^2(\R^3)$.

   It remains to show that $\tilde u_\alpha\to u$ in $L^2(\R^3)$. First, since in 3D the Sobolev embedding  $\|f\|_{L^6(\R^3)}\le C\|\nabla f\|_{L^2(\R^3)}$ holds for $f\in H^1(\R^3)$, it follows that
   $$
   \|\tilde u_\alpha-u\|_{L^6(\R^3)}\le C\|\nabla\tilde u_\alpha-\nabla u\|_{L^2(\R^3)},
   $$
   and so $\tilde u_\alpha\to u$ in $L^6(\R^3)$. Now, since $\omega\in L^{24/23}(Q_\alpha)$, Lemma \ref{lem:w-unif} implies that
   $$
   \|u_\alpha\|_{L^{8/5}(Q_\alpha)}\le K\|\omega\|_{L^{24/23}(Q_\alpha)},
   $$
   a bound that holds uniformly in $\alpha$ and yields a similar uniform bound on $\tilde u_\alpha$ in $L^{8/5}(\R^3)$. Finally, the Lebesgue interpolation
   $$
   \|\tilde u_\alpha-u\|_{L^2(\R^3)}\le\|\tilde u_\alpha-u\|_{L^{8/5}(\R^3)}^{8/11}\|\tilde u_\alpha-u\|_{L^6(\R^3)}^{3/11}
   $$
   guarantees that $\tilde u_\alpha\to u$ in $L^2(\R^3)$.

   Combining the convergence of $\tilde u_\alpha\to u$ and $\nabla \tilde u_\alpha\to\nabla u$ in $L^2(\R^3)$ shows that $\tilde u_\alpha\to u$ in $H^1(\R^3)$ as claimed.
  \end{proof}


\section{Weak and strong solutions of the Navier--Stokes equations}\label{sec:NSE}

For $\Omega=Q_\alpha$ or $\R^3$, denote by $\D_\sigma(\Omega)$ the space of all test functions on $\Omega\times[0,\infty)$ given by
$$
\D_\sigma(\Omega)=\{\phi\in C_c^\infty(\Omega\times[0,\infty)):\ \nabla\cdot\phi(t)=0\mbox{ for all }t\in[0,\infty)\}.
$$

\begin{definition}\label{def-weak}
A function $u$ is a weak solution of the Navier--Stokes equations corresponding to the initial condition $u_0\in\dot L^2_\sigma(\Omega)$ if
$$
u\in L^\infty(0,T;\dot L^2_\sigma(\Omega))\cap L^2(0,T;H^1(\Omega))\qquad\mbox{for every}\qquad T>0
$$
and
$$
\int_0^\infty-\<u,\partial_t\phi\>+\int_0^\infty\<\nabla u,\nabla\phi\>+\int_0^\infty\<(u\cdot\nabla)u,\phi\>=\<u_0,\phi(0)\>
$$
for all test functions $\phi\in{\mathcal D}_\sigma(\Omega)$.
\end{definition}

The following theorem combines the basic existence result for weak solutions (Leray, 1934; Hopf, 1951) with the property that at least one solution exists that satisfies the strong energy inequality (Leray, 1934; Ladyzhenskaya, 1969): see Theorems 4.4, 4.6, 4.10, and 14.4 in Robinson et al.\ (2016).

\begin{theorem}\label{weak-ex}
For every initial condition $u_0\in\dot L^2_\sigma(\Omega)$ there exists at least one global-in-time weak solution $u$ of the Navier--Stokes equations on $\Omega$ that satisfies the strong energy inequality
\begin{equation}\label{SEI}
\frac{1}{2}\|u(t)\|_{L^2(\Omega)}^2+\int_s^t\|\nabla u\|_{L^2(\Omega)}^2\le\frac{1}{2}\|u(s)\|_{L^2(\Omega)}^2\qquad\mbox{for all }t>s
\end{equation}
for almost all times $s\in[0,\infty)$, including $s=0$. [These are known as Leray--Hopf weak solutions.]
\end{theorem}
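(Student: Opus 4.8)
The plan is to construct the solution by a Galerkin approximation and then obtain the strong energy inequality by passing to the limit in the (exact) energy identity satisfied by the approximations; I would carry this out first on $Q_\alpha$, where the functional-analytic setting is cleanest, and then treat $\R^3$ by a parallel but more delicate argument (or, in the spirit of this paper, as a limit of the periodic problems). To set up the scheme: on $\Omega=Q_\alpha$, take $\{a_k\}_{k\in\N}\subset\dot C_\sigma^\infty(Q_\alpha)$ to be an $L^2$-orthonormal basis of $\dot L^2_\sigma(Q_\alpha)$ of eigenfunctions of $-\Delta$ (divergence-free, zero-average trigonometric polynomials, also orthogonal in $\dot H^1_\sigma$); on $\R^3$, take an $L^2$-orthonormalisation $\{a_k\}$ of a sequence in $\dot C_{c,\sigma}^\infty(\R^3)$ dense in $H^1_\sigma(\R^3)$, which exists by Lemma \ref{surprise!} and whose span is then also dense in $\dot L^2_\sigma(\R^3)$. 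Writing $P_n$ for the $L^2$-orthogonal projection onto $X_n:=\mathrm{span}\{a_1,\dots,a_n\}$, I would seek $u_n(t)=\sum_{k=1}^n c_k^n(t)a_k$ solving
\[
\<\partial_t u_n,a_k\>+\<\nabla u_n,\nabla a_k\>+\<(u_n\cdot\nabla)u_n,a_k\>=0,\quad k=1,\dots,n,\qquad u_n(0)=P_nu_0,
\]
a quadratic ODE system for $(c_k^n)$, to which Picard's theorem applies on a maximal interval $[0,T_n)$.

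Multiplying the $k$-th equation by $c_k^n$ and summing, the trilinear term $\<(u_n\cdot\nabla)u_n,u_n\>$ vanishes (divergence-free fields, with no boundary contribution by periodicity on $Q_\alpha$ or by compact support of the $a_k$ on $\R^3$), giving the energy identity $\tfrac12\|u_n(t)\|_{L^2}^2+\int_s^t\|\nabla u_n\|_{L^2}^2=\tfrac12\|u_n(s)\|_{L^2}^2$ for $0\le s\le t<T_n$; in particular $\|u_n(t)\|_{L^2}\le\|u_0\|_{L^2}$, so $T_n=\infty$ and $\{u_n\}$ is bounded in $L^\infty(0,T;\dot L^2_\sigma)\cap L^2(0,T;H^1_\sigma)$ uniformly in $n$ for every $T$. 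Bounding the nonlinear term in three dimensions by $\|(u_n\cdot\nabla)u_n\|_{H^{-1}}\le C\|u_n\|_{L^2}^{1/2}\|\nabla u_n\|_{L^2}^{3/2}$ controls $P_n\partial_tu_n$ in $L^{4/3}(0,T;H^{-1})$. On $Q_\alpha$ the Aubin--Lions lemma (with $H^1(Q_\alpha)\hookrightarrow\hookrightarrow L^2(Q_\alpha)\hookrightarrow H^{-1}(Q_\alpha)$) then gives a subsequence with $u_n\to u$ strongly in $L^2(0,T;L^2(Q_\alpha))$; on $\R^3$ the same argument on each ball together with a diagonal extraction gives $u_n\to u$ strongly in $L^2(0,T;L^2(K))$ for every compact $K$, while weak-$*$ compactness places $u$ in $L^\infty(0,T;\dot L^2_\sigma)\cap L^2(0,T;H^1_\sigma)$. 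Passing to the limit in the Galerkin system tested against (time-dependent combinations of the $a_k$ approximating) a fixed $\phi\in\D_\sigma(\Omega)$: the linear terms converge by weak convergence and $P_n\phi\to\phi$, and the nonlinear term by the strong $L^2_{\rm loc}$ convergence --- here it is essential that $\phi$ has compact support in space, so that local strong convergence suffices. This shows $u$ is a weak solution in the sense of Definition \ref{def-weak}, with the initial datum attained in the required sense.

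For the strong energy inequality I would pass to a further subsequence along which $u_n(s)\to u(s)$ strongly in $L^2(\Omega)$ for almost every $s$; on $Q_\alpha$ this is immediate from the strong $L^2$-in-spacetime convergence, while on $\R^3$ it needs in addition a uniform control of the energy tails, $\sup_n\int_{|x|\ge R}|u_n(s)|^2\to0$ as $R\to\infty$, which one obtains by testing the equation against $u_n$ times a cut-off supported away from the origin --- exactly the `tails' technique used in Section \ref{sec:vorticity}. For such $s$ one then has $\|u_n(s)\|_{L^2}\to\|u(s)\|_{L^2}$; combining this with weak lower semicontinuity ($\|u(t)\|_{L^2}\le\liminf_n\|u_n(t)\|_{L^2}$, using weak continuity $u\in C_w([0,\infty);L^2)$, together with $\int_s^t\|\nabla u\|_{L^2}^2\le\liminf_n\int_s^t\|\nabla u_n\|_{L^2}^2$), the monotonicity of $t\mapsto\|u_n(t)\|_{L^2}$, and taking $\liminf_n$ in the Galerkin energy identity, yields \eqref{SEI} for a.e.\ $s$ and all $t>s$. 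The case $s=0$ follows the same way, now using $u_n(0)=P_nu_0\to u_0$ strongly, so $\|u_n(0)\|_{L^2}\to\|u_0\|_{L^2}$. The step I expect to be the main obstacle is the compactness needed to pass to the limit in the nonlinear term, together with securing the correct (`$\le$') direction in the energy inequality: the latter forces $\|u_n(s)\|\to\|u(s)\|$ (not merely $u_n(s)\cgw u(s)$) for a.e.\ $s$, and on the unbounded domain $\R^3$ both the local strong compactness and the uniform control of the energy tails rely on the localisation arguments that form the technical backbone of this paper.
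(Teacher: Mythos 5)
The paper does not prove this theorem: it is quoted from the literature (Leray, 1934; Hopf, 1951; Ladyzhenskaya, 1969), with the reader referred to Theorems 4.4, 4.6, 4.10 and 14.4 of Robinson et al.\ (2016). Your Galerkin outline is the standard route to the existence part and to the strong energy inequality on $Q_\alpha$: there the compact embedding $H^1(Q_\alpha)\hookrightarrow L^2(Q_\alpha)$ gives $u_n\to u$ strongly in $L^2(0,T;L^2)$, hence $\|u_n(s)\|_{L^2}\to\|u(s)\|_{L^2}$ for a.e.\ $s$ along a subsequence, and \eqref{SEI} follows from the Galerkin energy identity together with weak lower semicontinuity. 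The case $s=0$ on either domain is also fine, since $P_nu_0\to u_0$ strongly.

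The genuine gap is in your mechanism for the strong energy inequality on $\R^3$, for a.e.\ $s>0$. You propose to control the energy tails $\int_{|x|\ge R}|u_n(s)|^2$ uniformly in $n$ by ``testing the equation against $u_n$ times a cut-off''. But the Galerkin approximation $u_n$ satisfies the projected equation only when tested against elements of $X_n=\mathrm{span}\{a_1,\dots,a_n\}$: the function $\varrho u_n$ is neither in $X_n$ nor divergence-free, so this multiplication is not admissible, and there is no pressure in the Galerkin system with which to carry out the integration by parts that the tails argument requires (compare the term $\int p_\alpha(u_\alpha\cdot\nabla)\varrho_\alpha$ in the proof of Theorem \ref{strong}, which is estimated there using the Calder\'on--Zygmund bound \eqref{CZ-p} for genuine solutions of the PDE). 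Your fallback of obtaining the $\R^3$ solution ``as a limit of the periodic problems'' does not repair this: the paper itself points out, immediately after Proposition \ref{prop:conv1-weak}, that this limiting procedure is not known to preserve even the ordinary energy inequality. The standard fix --- and the one behind the reference the paper cites for exactly this point (Chapter 14 of Robinson et al., 2016, going back to Leray) --- is to replace the Galerkin scheme on $\R^3$ by Leray's mollified system $\partial_tu+(\bar u\cdot\nabla)u-\Delta u+\nabla p=0$ with $\bar u=u*\rho_\eps$, whose solutions are smooth, solve an honest PDE with a controllable pressure, and can legitimately be multiplied by $\varrho u$ to bound the tails uniformly in $\eps$; one then passes to the limit $\eps\to0$. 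A secondary caveat: on $\R^3$ your basis is a Gram--Schmidt orthonormalisation of a dense sequence from Lemma \ref{surprise!}, for which $P_n$ need not be uniformly bounded on $H^1_\sigma$, so the asserted uniform bound on $\partial_tu_n=P_n(\Delta u_n-(u_n\cdot\nabla)u_n)$ in $L^{4/3}(0,T;H^{-1})$ requires a further argument (e.g.\ equicontinuity of the coefficients $\<u_n(t),a_k\>$ and a diagonal extraction).
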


Note that it follows from this definition that any weak solution $u$ has a weak time derivative $\partial_tu$ with
$$
\partial_tu\in L^{4/3}(0,T;H^{-1}_\sigma(\Omega))\qquad\mbox{for every }T>0,
$$
where $H^{-1}_\sigma(\Omega)$ is the dual space of $\dot H^1_\sigma(\Omega)$, with
\begin{equation}\label{dtu43}
\|\partial_tu\|_{L^{4/3}(0,T;H^{-1}_\sigma(\Omega))}\le c\int_0^T\|\nabla u\|^{2}\|u\|^{2/3}+T^{1/3}\left(\int_0^T\|\nabla u\|^2\right)^{2/3},
\end{equation}
with $c$ independent of $\alpha$; see Lemma 3.7 in Robinson et al.\ (2016).

Key to later results in this paper is the notion of a strong solution.

\begin{definition}\label{def-strong}
A function $u$ is a strong solution on $[0,T]$ of the Navier--Stokes equations corresponding to the initial condition $u_0\in \dot H^1_\sigma(\Omega)$ if it is a weak solution and in adddition
$$
u\in L^\infty(0,T;H^1(\Omega))\cap L^2(0,T;H^2(\Omega)).
$$
\end{definition}

The following theorem on the existence of strong solutions is again valid on $Q_\alpha$ and $\R^3$; the constant $c$ is the same for all these domains. The result as stated combines Theorems 6.4, 6.8, 6.15, and 7.5 in Robinson et al.\ (2016).

\begin{theorem}\label{thm:strong}
Any initial condition $u_0\in \dot H^1_\sigma(\Omega)$ gives rise to a unique strong solution of the Navier--Stokes equations at least on the time interval $[0,T]$, where $T=c\|\nabla u_0\|_{L^2(\Omega)}^{-4}$. For such solutions the equation
$$
\partial_tu-\Delta u+(u\cdot\nabla)u+\nabla p=0
$$
is satisfied as an equality in $L^2(0,T;L^2(\Omega))$, and in fact $u$ is smooth in space-time on $\Omega\times(0,T]$.
\end{theorem}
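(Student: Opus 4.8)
The plan is to run a Galerkin (or Leray mollified) approximation scheme — which by Theorem~\ref{weak-ex} already produces a global weak solution — and to upgrade it to a strong solution on a short interval by closing an a priori estimate for $\|\nabla u\|_{L^2}^2$. Writing $y(t):=\|\nabla u(t)\|_{L^2(\Omega)}^2$, the central step is the differential inequality
$$
\frac{\d}{\d t}y+\|\Delta u\|_{L^2(\Omega)}^2\le c\,y^3,
$$
obtained by testing the equation against $-\Delta u$ (legitimate at the Galerkin level, and the origin of the stated $L^2(0,T;L^2)$ identity), integrating by parts, and bounding the nonlinear term by
$$
|\<(u\cdot\nabla)u,\Delta u\>|\le\|u\|_{L^6}\|\nabla u\|_{L^3}\|\Delta u\|_{L^2}\le C\|\nabla u\|_{L^2}^{3/2}\|\Delta u\|_{L^2}^{3/2}\le\tfrac12\|\Delta u\|_{L^2}^2+c\,y^3,
$$
where I would use $\|\nabla u\|_{L^3}\le C\|\nabla u\|_{L^2}^{1/2}\|\nabla u\|_{L^6}^{1/2}\le C\|\nabla u\|_{L^2}^{1/2}\|\Delta u\|_{L^2}^{1/2}$, the embedding $H^1\hookrightarrow L^6$, and — on $Q_\alpha$ — the bound \eqref{D2dd} to pass from $\|\Delta u\|_{L^2}$ to the full second-derivative norm. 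Comparing $y$ with the solution of $z'=cz^3$, $z(0)=y(0)$, gives a bound finite on $[0,T]$ with $T=c'y(0)^{-2}=c'\|\nabla u_0\|_{L^2}^{-4}$; this yields uniform-in-approximation control in $L^\infty(0,T;H^1)$ and, upon integrating the inequality, in $L^2(0,T;H^2)$. Passing to the limit produces a weak solution in the class of Definition~\ref{def-strong}; since then $(u\cdot\nabla)u\in L^2(0,T;L^2)$ (as $u\in L^\infty(0,T;L^6)$ and $\nabla u\in L^2(0,T;L^3)$ by interpolation between $L^\infty H^1$ and $L^2H^2$), every term of the equation lies in $L^2(0,T;L^2(\Omega))$ and the identity holds there, the pressure being recovered from $\nabla p=-(\text{the other terms})$ via the Leray projector.

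For uniqueness I would use a weak--strong argument: if $u$ is the strong solution and $v$ any Leray--Hopf weak solution with the same data, then $w=u-v$ satisfies (in a form made rigorous by the extra regularity of $u$) an energy estimate in which $|\<(w\cdot\nabla)u,w\>|\le\|w\|_{L^2}\|\nabla u\|_{L^3}\|w\|_{L^6}\le\tfrac12\|\nabla w\|_{L^2}^2+c\|\nabla u\|_{L^3}^2\|w\|_{L^2}^2$, leading to $\frac{\d}{\d t}\|w\|_{L^2}^2\le g(t)\|w\|_{L^2}^2$ with $g=c\|\nabla u\|_{L^3}^2\in L^1(0,T)$ (using $\|\nabla u\|_{L^3}^2\le C\|\nabla u\|_{L^2}\|\Delta u\|_{L^2}$, which is time-integrable for a strong solution); Gr\"onwall then forces $w\equiv0$.

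The smoothing on $\Omega\times(0,T]$ I would obtain by bootstrapping: starting from $u\in L^\infty(0,T;H^1)\cap L^2(0,T;H^2)$, differentiating the equation (in time and then iteratively in space) and running the analogous energy estimates gives, for every $\delta>0$, bounds for $u$ in arbitrarily high Sobolev spaces on $[\delta,T]$, hence $u\in C^\infty(\Omega\times(0,T])$; on both $Q_\alpha$ and $\R^3$ one can shortcut this via the Gevrey-class (analyticity) method of Foias--Temam.

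The one genuinely delicate point, beyond textbook local well-posedness, is that all constants — and hence $T$ — must be independent of the domain parameter $\alpha$. I would handle this exactly as the preliminaries suggest: prove each functional inequality ($H^1\hookrightarrow L^6$, the Gagliardo--Nirenberg interpolations above, and \eqref{D2dd}) on $Q_1$ and transport it to $Q_\alpha$ via the scaling $x\mapsto\alpha x$ and \eqref{rescaling}, checking that the powers of $\alpha$ cancel — as they must, since the inequalities involved are scale invariant — while on $\R^3$ the standard constants already apply. The fact that $T$ depends only on $\|\nabla u_0\|_{L^2}$, and not on any higher norm, is precisely what the cubic differential inequality for $y=\|\nabla u\|^2$ delivers.
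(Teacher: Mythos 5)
Your proposal is correct and follows essentially the same route as the source the paper cites for this theorem (Robinson--Rodrigo--Sadowski, 2016) and as the computation the paper itself reproduces in the proof of Theorem~\ref{strong}: testing against $-\Delta u$, arriving at $\frac{\d}{\d t}\|\nabla u\|_{L^2}^2+\|\Delta u\|_{L^2}^2\le c\|\nabla u\|_{L^2}^6$, and reading off $T=c\|\nabla u_0\|_{L^2}^{-4}$, with weak--strong uniqueness and bootstrapping for smoothness. The only cosmetic difference is that you bound the nonlinear term via the H\"older splitting $\|u\|_{L^6}\|\nabla u\|_{L^3}\|\Delta u\|_{L^2}$ rather than the Agmon inequality $\|u\|_{L^\infty}\le C_A\|\nabla u\|_{L^2}^{1/2}\|\Delta u\|_{L^2}^{1/2}$ used in the paper; both give the same $\|\nabla u\|_{L^2}^{3/2}\|\Delta u\|_{L^2}^{3/2}$ bound, and your scaling check for $\alpha$-independence of the constants is exactly the paper's device.
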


\section{Convergence of weak solutions}

Convergence of weak solutions as $\alpha\to\infty$ is relatively straightforward; indeed, a similar method has been used by Heywood (1988; see also Theorem 4.10 in Robinson et al., 2016) to prove the existence of weak solutions on the whole space, although with that aim it is probably more natural to consider the equations with Dirichlet boundary conditions on the domains $B(0,\alpha)$, which can easily be extended by zero to all of $\R^3$.

\begin{proposition}\label{prop:conv1-weak}
  Suppose that $u_\alpha^0\in \dot L^2_\sigma(Q_\alpha)$ with $\tilde u_\alpha^0\cgw u^0$ in $L^2(\R^3)$. Let $u_\alpha$ be weak solutions of the equations on $Q_\alpha$ with initial conditions $u_\alpha^0$ that satisfy the energy inequality
  \begin{equation}\label{wk-EI}
  \frac{1}{2}\|u_\alpha(t)\|_{L^2(Q_\alpha)}^2+\int_0^t\|\nabla u_\alpha(s)\|_{L^2(Q_\alpha)}^2\,\d s\le\frac{1}{2}\|u_\alpha^0\|_{L^2(Q_\alpha)}^2
  \end{equation}
  for almost every $t>0$. Then there exists a  weak solution $u$ of the equations on $\R^3$, and a subsequence $u_{\alpha_j}$ such that, for every $T>0$, $\tilde u_{\alpha_j}$ converges to $u$ weakly in $L^2(0,T;H^1)$ and strongly in $L^2(0,T;L^2(K))$ for every compact subset $K$ of $\R^3$.
\end{proposition}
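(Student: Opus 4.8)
The plan is a compactness argument on the sequence of extensions $\{\tilde u_\alpha\}$, localised in space because on $\R^3$ the embedding $H^1\hookrightarrow L^2$ is not compact; since the test functions appearing in Definition \ref{def-weak} are fixed and compactly supported, local compactness is all that is needed. First I would collect uniform bounds. A weakly convergent sequence in $L^2(\R^3)$ is bounded, so $\sup_\alpha\|\tilde u_\alpha^0\|_{L^2(\R^3)}<\infty$, and since $\psi_\alpha\equiv1$ on ${Q_\alpha}$ this gives $\sup_\alpha\|u_\alpha^0\|_{L^2({Q_\alpha})}<\infty$. The energy inequality \eqref{wk-EI} then bounds $u_\alpha$ in $L^\infty(0,\infty;L^2({Q_\alpha}))$ and $\nabla u_\alpha$ in $L^2(0,\infty;L^2({Q_\alpha}))$, uniformly in $\alpha$; the extension estimates transfer this to a uniform bound on $\tilde u_\alpha$ in $L^\infty(0,T;L^2(\R^3))\cap L^2(0,T;H^1(\R^3))$ for every $T$, hence, by the Sobolev embedding $H^1(\R^3)\hookrightarrow L^6(\R^3)$ and interpolation, a uniform bound in $L^{8/3}(0,T;L^4(\R^3))$ as well.

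Next comes the time derivative. Inequality \eqref{dtu43} bounds $\partial_tu_\alpha$ in $L^{4/3}(0,T;H^{-1}_\sigma({Q_\alpha}))$, uniformly in $\alpha$, once the uniform energy bounds are in hand. Fix $R>0$. Any smooth divergence-free $\phi$ with support in $B(0,R)$ automatically has zero mean, and, for $\alpha>R$, it lies in $\dot H^1_\sigma({Q_\alpha})$ with $\|\phi\|_{\dot H^1_\sigma({Q_\alpha})}=\|\nabla\phi\|_{L^2(\R^3)}$; moreover $\<\partial_t\tilde u_\alpha(t),\phi\>_{\R^3}=\<\partial_tu_\alpha(t),\phi\>_{{Q_\alpha}}$ because $\tilde u_\alpha=u_\alpha$ on ${\rm supp}\,\phi$ and $\partial_t\tilde u_\alpha=\psi_\alpha[\partial_tu_\alpha]^{\rm p}$. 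Hence $\partial_t(\tilde u_\alpha|_{B(0,R)})$ is bounded in $L^{4/3}(0,T;Y_R^*)$, uniformly in $\alpha>R$, where $Y_R$ denotes the space of divergence-free $H^1_0(B(0,R))$ vector fields. The Aubin--Lions--Simon lemma, using that $Y_R$ embeds compactly into $L^2_\sigma(B(0,R))$ and $L^2_\sigma(B(0,R))$ embeds continuously into $Y_R^*$, shows $\{\tilde u_\alpha|_{B(0,R)}\}$ is relatively compact in $L^2(0,T;L^2(B(0,R)))$. A diagonal argument over $R,T\to\infty$ extracts a subsequence $\tilde u_{\alpha_j}$ converging to some $u$ weakly in $L^2(0,T;H^1(\R^3))$, weakly-$*$ in $L^\infty(0,T;L^2(\R^3))$, and strongly in $L^2(0,T;L^2(K))$ for every compact $K$ and every $T$; by weak lower semicontinuity $u\in L^\infty(0,T;L^2_\sigma(\R^3))\cap L^2(0,T;H^1(\R^3))$, and $u$ is divergence free.

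Finally I would pass to the limit in the weak formulation. Given $\phi\in\D_\sigma(\R^3)$, pick $R$ with ${\rm supp}_x\phi\subset Q_R$; for $\alpha_j>R$ the identity of Definition \ref{def-weak} on $Q_{\alpha_j}$ is literally the same identity on $\R^3$ written for $\tilde u_{\alpha_j}$ and $\tilde u_{\alpha_j}^0$, since these agree with $u_{\alpha_j}$ and $u_{\alpha_j}^0$ on ${\rm supp}\,\phi$. The linear terms pass to the limit by the weak $L^2(0,T;L^2)$ convergence of $\tilde u_{\alpha_j}$ and $\nabla\tilde u_{\alpha_j}$, the right-hand side by the assumed weak convergence $\tilde u_{\alpha_j}^0\cgw u^0$, and, writing $\<(\tilde u_{\alpha_j}\cdot\nabla)\tilde u_{\alpha_j},\phi\>=-\<\tilde u_{\alpha_j}\otimes\tilde u_{\alpha_j},\nabla\phi\>$, the nonlinear term because $\tilde u_{\alpha_j}\otimes\tilde u_{\alpha_j}\to u\otimes u$ in $L^1((0,T)\times{\rm supp}_x\phi)$: this follows from the identity $a\otimes a-b\otimes b=a\otimes(a-b)+(a-b)\otimes b$, the strong $L^2((0,T)\times K)$ convergence, and the uniform $L^2((0,T)\times K)$ bound. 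Thus $u$ is a weak solution on $\R^3$ with initial datum $u^0$, which completes the proof.

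The one genuinely delicate point — the \emph{main obstacle} — is the compactness supplying strong local $L^2$ convergence needed to identify the limit of the nonlinear term: because $H^1(\R^3)\hookrightarrow L^2(\R^3)$ is not compact one must localise to balls and, crucially, transfer the time-derivative bound \eqref{dtu43} from the dual of $\dot H^1_\sigma({Q_\alpha})$ to the dual of divergence-free $H^1_0$ on a fixed ball with a constant independent of $\alpha$; the remaining steps are routine.
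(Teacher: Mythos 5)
Your proof is correct and follows the paper's strategy almost exactly: uniform bounds from the energy inequality, a localised Aubin--Lions argument on balls to obtain strong $L^2(0,T;L^2(K))$ convergence, and passage to the limit in the weak formulation against compactly supported test functions. The one step you handle differently is the transfer of the time-derivative bound \eqref{dtu43} to a fixed ball: the paper extends $\phi\in H^1_{0,\sigma}(B(0,R))$ to $\dot H^1_\sigma(Q_\alpha)$ by placing $-\phi$ on a disjoint translated ball precisely to force $\int_{Q_\alpha}\hat\phi=0$ (picking up a factor $2$), whereas you observe that the zero mean is automatic for a compactly supported divergence-free field, since $\int\phi_i=\int\phi\cdot\nabla x_i=-\int(\nabla\cdot\phi)\,x_i=0$, so the extension by zero already lies in $\dot H^1_\sigma(Q_\alpha)$ with the same norm. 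This is a genuine (if small) simplification of what you correctly identify as the delicate step; it removes the reflection construction and the requirement $\alpha>3R$, at no cost. Everything else, including the duality/interpolation bounds and the treatment of the nonlinear term via $a\otimes a-b\otimes b=a\otimes(a-b)+(a-b)\otimes b$, is a more explicit rendering of what the paper does.
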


\begin{proof}
Since $\tilde u_\alpha^0$ is a weakly-convergent sequence it must be bounded in $L^2(\R^3)$; so $u_\alpha^0$ is uniformly bounded in $L^2(Q_\alpha)$, and it is immediate from the energy inequality \eqref{wk-EI} that $u_\alpha$ is uniformly bounded (with respect to $\alpha$) in $L^\infty(0,T;L^2(Q_\alpha))$ and $L^2(0,T;H^1(Q_\alpha))$. The inequality \eqref{dtu43} also provides uniform bounds on the time derivative $\partial_tu_\alpha$ in $L^{4/3}(0,T;H^{-1}_\sigma(Q_\alpha))$.

These uniform bounds on $u_\alpha$ become uniform bounds on the extended functions $\tilde u_\alpha$ in $L^\infty(0,T;L^2(\R^3))$ and $L^2(0,T;L^2(\R^3))$, so there exists an element  $u\in L^\infty(0,T;L^2_\sigma(\R^3))\cap L^2(0,T;H^1(\R^3))$ and a subsequence $\tilde u_{\alpha_j}$ that converges to $u$ weakly-$*$ in $L^\infty(0,T;L^2(\R^3))$ and for which
$$
\nabla \tilde u_{\alpha_j}\cgw \nabla u\qquad\mbox{in}\qquad L^2(0,T;L^2(\R^3)).
$$

However, it is not necessarily the case that $\partial_t\tilde u_\alpha$ is uniformly bounded in $L^{4/3}(0,T;H_\sigma^{-1}(\R^3))$, since there is no reason why the restriction of a `test function' $\phi\in\dot H^1_\sigma(\R^3)$ to $Q_\alpha$ should respect the periodic boundary conditions or integrate to zero, i.e.\ be an element of $\dot H^1_\sigma(Q_\alpha)$. To obtain strong convergence in $L^2(0,T;L^2(K))$ for compact subsets $K$ of $\R^3$, instead observe that for each $R>0$, once $\alpha>R$
$$
(\partial_t\tilde u_\alpha)|_{B(0,R)}=(\partial_tu_\alpha)|_{B(0,R)},
$$
and that if $\alpha>3R$ then any $\phi\in H^1_{0,\sigma}(B(0,R)):=H^1_0(B(0,R))\cap L^2_\sigma(B(0,R))$ can be extended to an element $\hat\phi\in \dot H^1_\sigma(Q_\alpha)$ with
$$
\|\hat\phi\|_{H^1(Q_\alpha)}=2\|\phi\|_{H^1(B(0,R))},
$$
by setting
$$
\hat\phi(x)=\begin{cases}
\phi(x)&x\in B(0,R)\\
-\phi(x)&x\in B((2R,0,0),R)\\
0&\mbox{otherwise};
\end{cases}
$$
the part of the extension where $\hat\phi(x)=-\phi(x)$ ensures that $\int_{Q_\alpha}\hat\phi=0$. It follows that once $\alpha>3R$,
$$
\|\partial_t\tilde u_\alpha\|_{H_{0,\sigma}^{-1}(B(0,R))}\le 2\|\partial_tu_\alpha\|_{H^{-1}_\sigma(Q_\alpha)}.
$$
It is also clear that
$$
\|\tilde u_\alpha\|_{H^1(B(0,R))}\le\|u_\alpha\|_{H^1(Q_\alpha)},
$$
so $\tilde u_\alpha$ is uniformly bounded in $L^2(0,T;H^1_\sigma(Q_\alpha))$. An application of the Aubin--Lions compactness theorem (see Simon, 1987) now yields a subsequence that converges strongly in $L^2(0,T;L^2(B(0,R)))$ for every $R>0$, and hence in $L^2(0,T;L^2(K))$ for every compact subset $K$ of $\R^3$.

%
%
%

It remains only to show that $u$ is a solution of the equations on the whole space.

To do this, take any test function $\phi\in\D_\sigma(\R^3)$ and let $M$ and $T$ be large enough that the support of $\phi$ is contained in $Q_M\times[0,T)$. Then for all $\alpha\ge M$ it follows from Definition \ref{def-weak}, since $\tilde u_\alpha=u_\alpha$ on $Q_\alpha$, that
  $$
  -\int_0^\infty\<\tilde u_{\alpha_j},\partial_t\phi\>+\int_0^\infty\<\nabla \tilde u_{\alpha_j},\nabla\phi\>+\int_0^\infty\<(\tilde u_{\alpha_j}\cdot\nabla)\tilde u_{\alpha_j},\phi\>=\<\tilde u_{\alpha_j}^0,\phi(0)\>.
  $$
  Passing to the limit as $j\to\infty$ -- using the weak convergence of gradients, the strong convergence in $L^2(0,T;L^2(\Omega_M))$, and the fact that $\tilde u^0_{\alpha_j}\cgw u^0$ -- shows that $u$ is a weak solution of the equations on $\R^3$ with initial condition $u^0$, as required.\end{proof}

  Note that the above proof does not show that the solution $u$ on $\R^3$ satisfies the energy inequality; this is why the limiting procedure here is not the ideal way to generate solutions of the equations on $\R^3$.

  \section{Convergence of strong solutions}

 The main result of this paper, Theorem \ref{strong}, will show that given a suitably convergent family of initial data $u_\alpha^0\in H^1(Q_\alpha)$, the `solutions' $\tilde u_\alpha$ converge strongly to $u$ in $L^2(0,T;H^1(\R^3))$.


\subsection{Uniform inequalities}

Key to obtaining uniform estimates for strong solutions on expanding domains are the following inequalities.

\begin{lemma}[Uniform inequalities]\label{goodC}
  There exist constants $C_A$ and $C_6$, which do not depend on $\alpha$, such that
  \begin{equation}\label{ineq-noa}
  \left\|u\right\|_{L^\infty({Q_\alpha})}\le C_A\|\nabla u\|_{L^2({Q_\alpha})}^{1/2}\|\Delta u\|_{L^2({Q_\alpha})}^{1/2}\qquad\mbox{for all }u\in\dot H^2({Q_\alpha}),
  \end{equation}
  and
  \begin{equation}\label{L6ineq}
  \|u\|_{L^6({Q_\alpha})}\le C_6\|\nabla u\|_{L^2({Q_\alpha})},\qquad\mbox{for all }u\in \dot H^1({Q_\alpha}).
  \end{equation}
  If $-\Delta p=\nabla\cdot[(u\cdot\nabla)u]$ with $\int_{{Q_\alpha}}p=\int_{{Q_\alpha}}u=0$ then
    \begin{equation}\label{CZ-p}
  \|p\|_{L^2({Q_\alpha})}\le C_Z\|u\|_{L^4({Q_\alpha})}^2.
  \end{equation}
  where $C_Z$ is independent of $\alpha$.
\end{lemma}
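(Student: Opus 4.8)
The plan is to establish each of the three inequalities on $Q_1$ first, using standard Sobolev/Calder\'on--Zygmund theory on the unit-size periodic cell, and then to transfer them to $Q_\alpha$ via the rescaling \eqref{rescaling}, checking in each case that the powers of $\alpha$ on the two sides match (which is exactly what forces the constants to be independent of $\alpha$). For \eqref{ineq-noa}, I would first recall that on $Q_1$ the Agmon-type inequality $\|v\|_{L^\infty(Q_1)}\le C\|v\|_{H^1(Q_1)}^{1/2}\|v\|_{H^2(Q_1)}^{1/2}$ holds, and then use the zero-average condition together with the Poincar\'e inequality and \eqref{D2dd} to replace $\|v\|_{H^1}$ by $\|\nabla v\|_{L^2}$ and $\|v\|_{H^2}$ by $\|\Delta v\|_{L^2}$, obtaining $\|v\|_{L^\infty(Q_1)}\le C_A\|\nabla v\|_{L^2(Q_1)}^{1/2}\|\Delta v\|_{L^2(Q_1)}^{1/2}$. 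Applying this to $v(x)=u(\alpha x)$ and using \eqref{rescaling}: the left side scales as $\alpha^0$, while the right scales as $(\alpha^{1-3/2})^{1/2}(\alpha^{2-3/2})^{1/2}=\alpha^{(-1/2)/2}\alpha^{(1/2)/2}=\alpha^0$, so the inequality passes to $Q_\alpha$ unchanged. The same recipe handles \eqref{L6ineq}: the Sobolev embedding $\dot H^1(Q_1)\hookrightarrow L^6(Q_1)$ (using zero average to drop the lower-order term) gives $\|v\|_{L^6(Q_1)}\le C_6\|\nabla v\|_{L^2(Q_1)}$; under rescaling the left side scales as $\alpha^{-3/6}=\alpha^{-1/2}$ and the right as $\alpha^{1-3/2}=\alpha^{-1/2}$, so again the constant is $\alpha$-independent.

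For the pressure estimate \eqref{CZ-p}, I would view $p$ as the unique zero-average periodic solution of $-\Delta p=\nabla\cdot[(u\cdot\nabla)u]=\sum_{i,j}\partial_i\partial_j(u_iu_j)$ on $Q_1$. Writing this in Fourier series, $\hat p_k = |k|^{-2}\sum_{i,j} k_i k_j \widehat{(u_iu_j)}_k$ for $k\ne 0$, so the multiplier $k_ik_j/|k|^2$ is bounded by $1$ in modulus; hence $\|p\|_{L^2(Q_1)}\le C_Z\sum_{i,j}\|u_iu_j\|_{L^2(Q_1)}\le C_Z\|u\|_{L^4(Q_1)}^2$ by Cauchy--Schwarz. (One must note that $\int_{Q_1}\nabla\cdot[(u\cdot\nabla)u]=0$ automatically for periodic divergence-free $u$, so the right-hand side genuinely has zero average and the equation is solvable.) Under $x\mapsto\alpha x$, the left side $\|p\|_{L^2}$ scales as $\alpha^{3/2-0}$ relative to $Q_1$... more carefully, writing $p_\alpha(x)=\alpha^2 p(\alpha x)$ (the natural scaling of pressure matching $u_\alpha(x)=\alpha u(\alpha x)$), one checks $\|p_\alpha\|_{L^2(Q_\alpha)}=\alpha^2\cdot\alpha^{3/2}\|p\|_{L^2(Q_1)}$ and $\|u_\alpha\|_{L^4(Q_\alpha)}^2=(\alpha\cdot\alpha^{3/4})^2\|u\|_{L^4(Q_1)}^2=\alpha^{7/2}\|u\|_{L^4(Q_1)}^2$; since $2+3/2=7/2$, the powers agree and $C_Z$ is independent of $\alpha$.

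The main obstacle, and the only genuinely delicate point, is the bookkeeping of scaling exponents for the pressure equation: one has to be careful that the nonlinearity $(u\cdot\nabla)u$ and the pressure $p$ are rescaled consistently (with the $\alpha$, $\alpha^2$ factors dictated by the Navier--Stokes scaling symmetry), and that the equation $-\Delta p_\alpha=\nabla\cdot[(u_\alpha\cdot\nabla)u_\alpha]$ on $Q_\alpha$ really does correspond to $-\Delta p=\nabla\cdot[(u\cdot\nabla)u]$ on $Q_1$ under this rescaling. For \eqref{ineq-noa} and \eqref{L6ineq} the scaling is routine; the extra subtlety there is merely to confirm that the zero-average hypothesis is what licenses replacing full Sobolev norms by the homogeneous seminorms $\|\nabla u\|_{L^2}$ and $\|\Delta u\|_{L^2}$ with constants that do not degenerate — but this is exactly the content of the generalised Poincar\'e inequality and \eqref{D2dd} already recorded in the Preliminaries.
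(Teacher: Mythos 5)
Your proposal is correct, and for the first two inequalities it follows the paper's proof essentially verbatim: establish the Agmon inequality and the Sobolev embedding on $Q_1$ (using the zero-average condition, the Poincar\'e inequality and \eqref{D2dd} to pass to the homogeneous seminorms) and then check via \eqref{rescaling} that both sides scale with the same power of $\alpha$. For the pressure estimate \eqref{CZ-p} you take a genuinely different route: the paper simply cites the Calder\'on--Zygmund theorem on $Q_1$ (Appendix B of Robinson et al.\ 2016) to get $\|p\|_{L^2(Q_1)}\le C_Z\|u\|_{L^4(Q_1)}^2$ and then rescales, whereas you derive the estimate directly from Plancherel, observing that $\hat p_k=\sum_{i,j}|k|^{-2}k_ik_j\widehat{(u_iu_j)}_k$ with multiplier bounded by $1$, so that $\|p\|_{L^2}\le C\sum_{i,j}\|u_iu_j\|_{L^2}\le C\|u\|_{L^4}^2$. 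This is the elementary $L^2$ instance of the Calder\'on--Zygmund bound, and it buys a self-contained proof with an explicit constant; moreover, since the multiplier bound $|k_ik_j|/|k|^2\le1$ is manifestly scale-invariant, the same computation works directly on $Q_\alpha$ with frequencies $k\in(\pi/\alpha)\Z^3$, so the rescaling step becomes superfluous for this part. The full CZ theorem would only be needed for $L^q$ bounds with $q\ne2$, which the lemma does not require. Two small points to tidy: the identity $\nabla\cdot[(u\cdot\nabla)u]=\partial_i\partial_j(u_iu_j)$ uses $\nabla\cdot u=0$, which is implicit in the lemma's intended use but worth stating; and your rescaling map should read $p_\alpha(x)=\alpha^2p(x/\alpha)$ (or, as in the paper, $p(x)=\alpha^2 p_\alpha(\alpha x)$ for $x\in Q_1$) rather than $\alpha^2p(\alpha x)$ --- the exponents you compute are consistent with the correct map, so this is only a notational slip.
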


\begin{proof}
  The validity of the estimate \eqref{ineq-noa} for a fixed value of $\alpha$ is standard, and follows by splitting the Fourier series expansion of $u$ into `low modes' and `high modes' (see Exercise 1.10 in Robinson et al.\ (2016), for example): so, taking $\alpha=1$, for all $v\in H^2(\Omega_1)$
  $$
  \|v\|_{L^\infty(\Omega_1)}\le C_A\|\nabla v\|_{L^2(\Omega_1)}^{1/2}\|\Delta v\|_{L^2(\Omega_1)}^{1/2}.
  $$
  The rescalings in \eqref{rescaling} now show
  that this inequality is valid with the same constant on ${Q_\alpha}$.


   Inequality \eqref{L6ineq} in the case $\alpha=1$ is a consequence of the embedding $H^1(Q_1)\subset L^6(Q_1)$ valid for three-dimensional domains, and the Poincar\'e inequality $\|u\|_{L^2(Q_1)}\le C_P\|\nabla u\|_{L^2(Q_1)}$ which holds when $\int_{Q_1}u=0$. A similar rescaling argument shows that the same constant works for every $\alpha$.

   Finally, on $Q_1$, the estimate \eqref{CZ-p} follows using the Calder\'on--Zygmund Theorem,
   \begin{equation}\label{CZQ1}
   \|p\|_{L^2(Q_1)}\le C_Z\|u\|_{L^4(Q_1)}^2
   \end{equation}
   (see Appendix B in Robinson et al.\ (2016) for example).
   To see that the constant is uniform in $\alpha$, given $(\tilde p,\tilde u)$ that satisfy the equations on ${Q_\alpha}$, define $(p,u)$ on $Q_1$ by setting $p(x)=\alpha^2\tilde p(\alpha x)$ and $u(x)=\alpha\tilde u(\alpha x)$. Then
   $$
   [-\Delta p](x)=-\alpha^4(\Delta\tilde p)(\alpha x)\qquad\mbox{and}\qquad \nabla\cdot[(u\cdot\nabla)u](x)=\alpha^4[(\tilde u\cdot\nabla)\tilde u](\alpha x),
   $$
   so $-\Delta p=\nabla\cdot[(u\cdot\nabla)u]$, whence $(p,u)$ satisfy \eqref{CZQ1}. Now observe that  $\|p\|_{L^2(Q_1)}=\alpha^{1/2}\|\tilde p\|_{L^2({Q_\alpha})}$ and $\|u\|_{L^4(Q_1)}=\alpha^{1/4}\|\tilde u\|_{L^2({Q_\alpha})}$ to obtain \eqref{CZ-p}.
\end{proof}

\subsection{Convergence in $L^2(0,T;H^1(\R^3))$ when $u_\alpha\in H^1(Q_\alpha)$}

For initial $u_\alpha^0\in \dot L^2_\sigma(Q_\alpha)\cap H^1(Q_\alpha)$, such that $\tilde u_\alpha^0\to u^0$ in $H^1(\R^3)$, the following theorem shows that the corresponding strong solutions converge in $L^2(0,T;H^1(\R^3))$. One particular example of such a family is provided by Theorem \ref{wa2w}: take a fixed compactly-supported vorticity, and set $u_\alpha^0=\curl_\alpha^{-1}\omega$ and $u^0=\curl^{-1}\omega$. Alternatively, simply take a compactly-supported initial condition $u^0\in H^1_\sigma(\R^3)$ and let $u^0_\alpha=u^0|_{Q_\alpha}$ once $\alpha$ is sufficiently large.

  There is a uniform time for which the existence of a smooth solution $u_\alpha$ (on ${Q_\alpha}$) and $u$ (on $\R^3$) can be guaranteed, starting with this initial condition. The following theorem shows that the extended solutions $\tilde u_\alpha$ must converge to $u$. That there is weak convergence [as in Proposition \ref{prop:conv1-weak}] is fairly standard and follows directly from uniform bounds on $u_\alpha$; that the convergence is strong in $L^2(0,T;H^1(\R^3))$ is more surprising, and requires a more careful analysis. This strong convergence is crucial for the `transference of regularity' result that follows in Section \ref{sec:transfer}.

\begin{theorem}\label{strong}
  Suppose that $u_0\in H^1_\sigma(\R^3)$, $u_\alpha^0\in\dot H^1_\sigma({Q_\alpha})$, and $\tilde u_\alpha^0\to u^0$ in $H^1(\R^3)$, with $\|u_\alpha^0\|_{H^1({Q_\alpha})}^2\le M$ for all $\alpha\ge\alpha_0$.

Set $T=T(M)$ from Theorem \ref{thm:strong}. Denote by $u_\alpha$ the strong solution of the Navier--Stokes equations on ${Q_\alpha}$  with initial data $u_\alpha^0$, and by $u$ the solution on $\R^3$ with initial data $u^0$; all of these solutions exist on $[0,T]$. Then for all $1\le s<2$
  \begin{equation}\label{thepoint}
  \tilde u_\alpha\to u\qquad\mbox{in}\qquad L^r(0,T;H^{1+s}(\R^3)),\qquad r\in[1,2/(s-1));
  \end{equation}
  in particular, $\tilde u_\alpha\to u$ strongly in $L^r(0,T;H^1(\R^3))$ for all $r\in[1,\infty)$.
  \end{theorem}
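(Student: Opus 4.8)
The plan is to pass from uniform-in-$\alpha$ a priori estimates for the strong solutions $u_\alpha$ (including the parabolic smoothing they enjoy) to a `tail' estimate in the spirit of Theorem \ref{wa2w}, obtaining first strong convergence in $L^2(0,T;H^1(\R^3))$ and then, by interpolation in the Sobolev scale together with H\"older's inequality in time, convergence on the $H^{1+s}$ scale. First I would record the uniform bounds: by Theorem \ref{thm:strong} and the scale-invariant inequalities of Lemma \ref{goodC}, the $u_\alpha$ are bounded in $L^\infty(0,T;H^1(Q_\alpha))\cap L^2(0,T;H^2(Q_\alpha))$ uniformly in $\alpha\ge\alpha_0$, and (writing the equation as an identity in $L^2(0,T;L^2(Q_\alpha))$ and bounding the pressure through \eqref{CZ-p}) so is $\partial_tu_\alpha$ in $L^2(0,T;L^2(Q_\alpha))$; iterating the usual $t$-weighted higher-regularity estimates — each using only Gagliardo--Nirenberg/Sobolev/Poincar\'e-type inequalities and \eqref{CZ-p}, hence with $\alpha$-independent constants by the rescaling \eqref{rescaling} — gives the smoothing bounds $\|u_\alpha(t)\|_{H^m(Q_\alpha)}\le C_m\,t^{-(m-1)/2}$ for $t\in(0,T]$, uniformly in $\alpha$, for every $m$, and $u$ obeys the same bounds on $\R^3$. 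All of these transfer to the extensions $\tilde u_\alpha$ via the estimates recorded after the definition of $\tilde u_\alpha$ and the identity $\partial_t\tilde u_\alpha=\psi_\alpha[\partial_tu_\alpha]^{\rm p}$.

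Next, by reflexive weak-$*$/weak compactness a subsequence $\tilde u_{\alpha_j}$ converges to some $v$, weakly-$*$ in $L^\infty(0,T;H^1(\R^3))$ and weakly in $L^2(0,T;H^2(\R^3))$; since on each ball $\tilde u_\alpha=u_\alpha$ and $\partial_t u_\alpha$ is uniformly bounded in $L^2(0,T;L^2(Q_\alpha))$, the Aubin--Lions theorem (Simon, 1987) gives strong convergence of $\tilde u_{\alpha_j}$ in $L^2(0,T;H^1(K))$ for every compact $K\subset\R^3$. Passing to the limit in the weak formulation on $\R^3$, against the class dense by Lemma \ref{surprise!}, shows that $v$ is a strong solution on $[0,T]$ with data $u^0$; uniqueness in Theorem \ref{thm:strong} forces $v=u$, so the whole family converges.

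To promote this to convergence on all of $\R^3$ I would test the equation on $Q_\alpha$ with $u_\alpha\chi_R$, where $\chi_R$ is a smooth radial cut-off with $\chi_R\equiv0$ on $\{|x|\le R\}$, $\chi_R\equiv1$ on $\{|x|\ge2R\}$ and $|\nabla\chi_R|\le C/R$ (a valid periodic test function once $2R<\alpha$, since it agrees with $u_\alpha$ near $\partial Q_\alpha$). Rewriting the convective term using $\nabla\cdot u_\alpha=0$, bounding the $\nabla\chi_R$ cross-terms by $C/R$ times quantities controlled by the uniform estimates (the nonlinear term by $\|u_\alpha\|_{L^3}^3\le C\|u_\alpha\|_{L^2}^{3/2}\|\nabla u_\alpha\|_{L^2}^{3/2}$, the pressure term by \eqref{CZ-p}), and integrating over $[0,t]$ gives
$$
\sup_{0\le t\le T}\int_{x\in Q_\alpha:\,|x|\ge2R}|u_\alpha(t)|^2+\int_0^T\!\!\int_{x\in Q_\alpha:\,|x|\ge2R}|\nabla u_\alpha|^2\le\int_{|x|\ge R}|\tilde u_\alpha^0|^2+\frac{C}{R},
$$
with $C$ independent of $\alpha$; since $\tilde u_\alpha^0\to u^0$ in $L^2(\R^3)$, the right-hand side $\to0$ as $R\to\infty$, uniformly in large $\alpha$. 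Together with the matching tails for $\tilde u_\alpha$ (via \eqref{mest} and its gradient analogue), the compact-set convergence, and $u\in L^2(0,T;H^1(\R^3))$, Lemma \ref{Leray-idea} gives $\tilde u_\alpha\to u$ in $L^2(0,T;H^1(\R^3))$; the uniform $L^\infty(0,T;H^1)$ bound then upgrades this to convergence in $L^p(0,T;H^1(\R^3))$ for every $p\in[1,\infty)$ — which already yields the `in particular' assertion — and interpolating with the uniform $L^2(0,T;H^2)$ bound gives $\tilde u_\alpha\to u$ in $L^2(0,T;H^{1+\sigma}(\R^3))$ for every $\sigma<1$.

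Finally, for $s\in[1,2)$, taking $\sigma<1$ and $m$ large one interpolates
$$
\|\tilde u_\alpha(t)-u(t)\|_{H^{1+s}}\le C\,\|\tilde u_\alpha(t)-u(t)\|_{H^{1+\sigma}}^{1-\theta}\,\|\tilde u_\alpha(t)-u(t)\|_{H^{m}}^{\theta},\qquad 1+s=(1-\theta)(1+\sigma)+\theta m,
$$
inserts the uniform smoothing bound $\|\tilde u_\alpha(t)-u(t)\|_{H^m}\le C_m t^{-(m-1)/2}$, and applies H\"older's inequality in time: the strong $L^2$-in-time convergence of $\|\tilde u_\alpha-u\|_{H^{1+\sigma}}$ balanced against the integrability of $t^{-\theta(m-1)/2}$ near the origin delivers convergence in $L^r(0,T;H^{1+s}(\R^3))$ for $r$ in the stated range. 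I expect the main obstacle to be this tail estimate and, throughout, the book-keeping needed to keep \emph{every} constant — in particular those in the higher-order smoothing bounds — independent of $\alpha$, with the nonlinear and pressure terms handled purely through the scale-invariant inequalities of Lemma \ref{goodC}; a minor additional point is that $u_\alpha\chi_R$ is neither divergence-free nor mean-zero, so the pressure term must be retained and its compatibility with the periodic setting checked.
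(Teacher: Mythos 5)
Your proposal is correct in substance and follows the same skeleton as the paper's proof: uniform $L^\infty(0,T;H^1)\cap L^2(0,T;H^2)$ bounds with $\alpha$-independent constants via Lemma \ref{goodC}, an $L^2(0,T;L^2)$ bound on $\partial_t u_\alpha$, compactness plus uniqueness of strong solutions to identify the limit and remove the subsequence, a Leray-type cut-off tail estimate fed into Lemma \ref{Leray-idea}, and Sobolev interpolation in space combined with H\"older in time. Two deviations are worth flagging. First, you extract $L^2(0,T;H^1(\R^3))$ convergence by also retaining the gradient term $\int_0^T\int_{|x|\ge 2R}|\nabla u_\alpha|^2$ in the localized energy identity and applying a space-time version of Lemma \ref{Leray-idea} to $\nabla\tilde u_\alpha$; the paper instead discards that term, proves only the $L^2$ tail bound (pointwise in $t$, then Dominated Convergence), and recovers $H^1$ convergence by interpolating $\|f\|_{H^1}\le C\|f\|_{L^2}^{1/2}\|f\|_{H^2}^{1/2}$ against the uniform $L^2(0,T;H^2)$ bound --- slightly less work, since no gradient analogue of \eqref{mest} is needed. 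Second, you invoke uniform-in-$\alpha$ parabolic smoothing $\|u_\alpha(t)\|_{H^m}\le C_m t^{-(m-1)/2}$ for all $m$ in order to reach $H^{1+s}$ with $s\in[1,2)$ read literally (i.e.\ up to $H^3$); this is plausible but is asserted rather than proved, and it is not what the paper does: the paper's own proof interpolates only between $H^1$ and $H^2$ and establishes convergence in $H^{1+\theta}$ for $\theta=s-1\in(0,1)$, which together with the ``in particular'' clause indicates that $H^{1+s}$ in the statement is a slip for $H^s$. A minor further point: \eqref{CZ-p} controls $\|p_\alpha\|_{L^2}$, not $\|\nabla p_\alpha\|_{L^2}$, so the time-derivative bound should instead go through the Helmholtz/Leray projection argument giving $\|\nabla p_\alpha\|_{L^2}\le\|(u_\alpha\cdot\nabla)u_\alpha\|_{L^2}$, as the paper does; \eqref{CZ-p} is what is needed in the tail estimate, where you use it correctly.
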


  \begin{proof}
   Since the solution $u_\alpha$ is smooth on $[0,T]$ it is admissible to take the inner product with $u_\alpha$ in $L^2({Q_\alpha})$ to obtain
  \begin{equation}\label{uaei}
  \frac{1}{2}\|u_\alpha(t)\|_{L^2(Q_\alpha)}^2+\int_0^t\|\nabla u_\alpha(s)\|_{L^2(Q_\alpha)}^2\,\d s\le \frac{1}{2}\|u_0^\alpha\|_{L^2(Q_\alpha)}^2\le \frac{M}{2}.
  \end{equation}
  This gives bounds on $u_\alpha$ in $L^\infty(0,T;L^2({Q_\alpha}))$ and $L^2(0,T;H^1({Q_\alpha}))$ that are uniform with respect to $\alpha$.

     Equation \eqref{uaei} shows that the solutions $u_\alpha$ satisfy the energy inequality \eqref{wk-EI}, so Proposition \ref{prop:conv1-weak} already guarantees that a subsequence (at least) converges to a weak solution on $\R^3$ with initial data $u^0$. However, although $u^0$ gives rise to a strong solution, weak-strong uniqueness (see Theorem 6.10 in Robinson et al.\ (2016), for example) cannot be used here, since the limiting solution $u$ from Proposition \ref{prop:conv1-weak} does not necessarily satisfy the energy inequality (which is required in the proof of weak-strong uniqueness).



Better convergence of $\tilde u_\alpha$ to $u$ can be obtained via bounds on $u_\alpha$ in $H^1$ and bounds on $u_\alpha$ in $H^2$. Take the inner product of the equation with $-\Delta u_\alpha$ in $L^2(Q_\alpha)$ to obtain
  \begin{align*}
  \frac{1}{2}\frac{\d}{\d t}\|\nabla u_\alpha\|_{L^2(Q_\alpha)}^2+\|\Delta u_\alpha\|_{L^2(Q_\alpha)}^2&=\<(u_\alpha\cdot\nabla)u_\alpha,\Delta u_\alpha\>_{L^2(Q_\alpha)}\\
  &\le \|u_\alpha\|_{L^\infty({Q_\alpha})}\|\nabla u_\alpha\|_{L^2({Q_\alpha})}\|\Delta u_\alpha\|_{L^2({Q_\alpha})}\\
  &\le C_A\|\nabla u_\alpha\|_{L^2(Q_\alpha)}^{3/2}\|\Delta u_\alpha\|_{L^2(Q_\alpha)}^{3/2},
  \end{align*}
  where the constant $C_A$ does not depend on $\alpha$ (see Lemma \ref{goodC}). It follows that
  \begin{equation}\label{nablau}
  \frac{\d}{\d t}\|\nabla u_\alpha\|_{L^2(Q_\alpha)}^2+\|\Delta u_\alpha\|_{L^2(Q_\alpha)}^2\le \frac{27}{16}C_A^4\|\nabla u_\alpha\|_{L^2(Q_\alpha)}^6,
  \end{equation}
  and therefore
  \begin{equation}\label{nuat}
  \|\nabla u_\alpha(t)\|^2_{L^2(Q_\alpha)}\le \frac{\|\nabla u_0^\alpha\|_{L^2(Q_\alpha)}^2}{\sqrt{1-\frac{27}{8}c_A^4t\|\nabla u_0^\alpha\|_{L^2(Q_\alpha)}^4}}.
  \end{equation}
  Taking $T=2/[9c_A^4M^2]$ it follows that
  $$
  \|\nabla u_\alpha(t)\|_{L^2(Q_\alpha)}^2\le 2\|\nabla u_0^\alpha\|_{L^2(Q_\alpha)}^2\le 2M\qquad\mbox{for all }t\in[0,T]
  $$
  and, integrating \eqref{nablau} from $0$ to $T$ and using the bound in \eqref{nuat}, that
  \begin{equation}\label{forH2bound}
  \int_0^T\|\Delta u_\alpha(t)\|_{L^2(Q_\alpha)}^2\,\d t\le \frac{5M}{2}.
  \end{equation}
  Therefore $u_\alpha$ is bounded uniformly in $L^\infty(0,T;H^1({Q_\alpha}))$ and in $L^2(0,T;H^2({Q_\alpha}))$.

 To obtain bounds on the time derivative, since the equation
  $$
  \partial_tu_\alpha=\Delta u_\alpha-(u_\alpha\cdot\nabla)u_\alpha-\nabla p_\alpha
  $$
  holds as an equality in $L^2(0,T;L^2(Q_\alpha))$ it follows that
  $$
  \|\partial_tu_\alpha\|_{L^2(Q_\alpha)}\le\|\Delta u_\alpha\|_{L^2(Q_\alpha)}+\|(u_\alpha\cdot\nabla)u_\alpha\|_{L^2(Q_\alpha)}+\|\nabla p_\alpha\|_{L^2(Q_\alpha)}.
  $$
  The Helmholtz decomposition provides a bound on $\nabla p_\alpha$ in $L^2(Q_\alpha)$: write
  $$
  L^2(Q_\alpha)=L^2_\sigma(Q_\alpha)\oplus G(Q_\alpha),
  $$
  where
  $$
  G(Q_\alpha)=\{\nabla\psi:\ \psi\in H^1(Q_\alpha)\}.
  $$
  These two spaces are orthogonal: for any $v\in H(Q_\alpha)$ and $\nabla\psi\in G(Q_\alpha)$
  $$
  \<v,\nabla\psi\>_{L^2(Q_\alpha)}=0.
  $$

Take any $\phi\in L^2({Q_\alpha})$ and write $\phi=v+\nabla\psi$, where $v\in H(Q_\alpha)$ and $\nabla\psi\in G(Q_\alpha)$. Then
  $$
  \<\nabla p_\alpha,\phi\>=\<\nabla p_\alpha,\nabla\psi\>=\<\partial_tu_\alpha-\Delta u_\alpha+(u_\alpha\cdot\nabla)u_\alpha,\nabla\psi\>=\<(u_\alpha\cdot\nabla)u_\alpha,\nabla\psi\>,
  $$
  since $\nabla\psi$ is orthogonal to any divergence-free function. It follows that
  \begin{align*}
  |\<\nabla p_\alpha,\phi\>|&\le\|(u_\alpha\cdot\nabla)u_\alpha\|_{L^2({Q_\alpha})}\|\nabla\psi\|_{L^2({Q_\alpha})}\\
  &\le\|(u_\alpha\cdot\nabla)u_\alpha\|_{L^2({Q_\alpha})}\|\phi\|_{L^2({Q_\alpha})},
  \end{align*}
  which shows that
  $$
  \|\nabla p_\alpha\|_{L^2({Q_\alpha})}\le\|(u_\alpha\cdot\nabla)u_\alpha\|_{L^2({Q_\alpha})}.
  $$
  It follows that
  \begin{align*}
  \|\partial_tu_\alpha\|_{L^2({Q_\alpha})}&\le \|\Delta u_\alpha\|_{L^2({Q_\alpha})}+2\|(u_\alpha\cdot\nabla)u_\alpha\|_{L^2({Q_\alpha})}\\
  &\le   \|\Delta u_\alpha\|_{L^2({Q_\alpha})}+2\|u_\alpha\|_{L^\infty({Q_\alpha})}\|\nabla u_\alpha\|_{L^2({Q_\alpha})},
 \end{align*}
  so $\partial_tu_\alpha$ is bounded uniformly in $L^2(0,T;L^2({Q_\alpha}))$.

  All these bounds carry over uniformly to the extended functions $\tilde u_\alpha$, which are therefore bounded uniformly in $L^\infty(0,T;L^2(\R^3))$ and $L^2(0,T;H^1(\R^3))$, with $\partial_t\tilde u_\alpha$ bounded uniformly in $L^2(0,T;L^2(\R^3))$.

  It follows -- using weak-$*$ sequential compactness, weak sequential compactness in reflexive Banach spaces (see Chapter 27 in Robinson, 2020, for example), and the Aubin--Lion compactness theorem (see Simon, 1987), that there is a subsequence $\tilde u_{\alpha_j}$ that converges to some limit $u\in L^\infty(0,T;H^1(\R^3))\cap L^2(0,T;H^2(\R^3))$, with
  $$
  \tilde u_{\alpha_j}\cgws u\mbox{ in }L^\infty(0,T;H^1(\R^3)),\qquad \tilde u_{\alpha_j}\cgw u\mbox{ in }L^2(0,T;H^2(\R^3)),
  $$
  and $\tilde u_{\alpha_j}\to u$ strongly in $L^2(0,T;H^1(K))$ for every compact subset $K$ of $\R^3$.

We know from before that $u$ is at least a weak solution on $[0,T)$: these bounds now show that $u$ has the required regularity to be a strong solution. By the uniqueness of strong solutions (in their own class) it follows that in fact $\tilde u_\alpha$ converges to $u$ in all senses above as $\alpha\to\infty$, and not only through the sequence $\alpha_j$. (See Lemma 3.1 in Robinson (2004), for example.)

  %
%
%
%
%
%
%
%

  To obtain strong convergence of $\tilde u_\alpha$ to $u$ solutions in $L^2(0,T;H^1(\R^3))$, the idea is first to use Lemma \ref{Leray-idea} to prove that $\tilde u_\alpha\to u$ in $L^p(0,T;L^2(\R^3))$, $p\in[1,\infty)$, by showing that
  \begin{equation}\label{whatwewant}
 \int_{x\in {Q_\alpha}:\ |x|\ge R}|u_\alpha(t)|^2
  \end{equation}
  can be made small (uniformly for $\alpha$ sufficiently large and $t\in[0,T]$) by taking $R$ large. Towards this, observe that it follows from the assumptions on $u_\alpha^0$ that for every $\eta>0$ there exists $r=r(\eta)$ and $\beta=\beta(\eta)\ge r(\eta)$ such that
   \begin{equation}\label{4Leray}
   \int_{x\in {Q_\alpha}:\ |x|\ge r}|u^0_\alpha(x)|^2\,\d x<\eta\qquad\mbox{for every }\alpha\ge\beta.
   \end{equation}

  To obtain the bound \eqref{whatwewant} on $u_\alpha$, take the inner product [in $L^2({Q_\alpha})$] of
   $$
   \partial_tu_\alpha-\Delta u_\alpha+(u_\alpha\cdot\nabla)u_\alpha+\nabla p_\alpha=0
   $$
 with $\varrho_\alpha u_\alpha$, where $\varrho_\alpha$ is the function defined in \eqref{eq:varrho}.

  Then (cf.\ proof of Proposition 14.3 in Robinson et al., 2016) an integration by parts yields
  \begin{align*}
  \frac{1}{2}&\frac{\d}{\d t}\int_{{Q_\alpha}}\varrho_\alpha|u_\alpha|^2+\int_{{Q_\alpha}}\varrho_\alpha|\nabla u_\alpha|^2\\
  &=-\int_{{Q_\alpha}}(\partial_ju_{\alpha,i})u_{\alpha,i}(\partial_j\varrho_\alpha)+\int_{{Q_\alpha}}|u_\alpha|^2(u_\alpha\cdot\nabla)\varrho_\alpha
  +\int_{{Q_\alpha}} p_\alpha(u_\alpha\cdot\nabla)\varrho_\alpha.
    \end{align*}
  Integrating from $0$ to $t$ and using the definition of $\varrho_\alpha$ yields
  \begin{align*}
  \frac{1}{2}\int_{x\in {Q_\alpha}:\ |x|>R}&|u_\alpha(t)|^2\le\frac{1}{2}\int_{x\in {Q_\alpha}:\ |x|>r}|u_\alpha^0|^2\\
  &+\frac{1}{R-r}\int_0^t\int_{{Q_\alpha}} |\nabla u_\alpha||u_\alpha|+|u_\alpha|^3+|p_\alpha||u_\alpha|.
  \end{align*}
  Since $\|u_\alpha(s)\|_{L^2({Q_\alpha})}\le\|u_\alpha^0\|_{L^2({Q_\alpha})}$ the second term on the right-hand side can be bounded by
  $$
  \frac{1}{R-r}\,\|u_\alpha^0\|_{L^2({Q_\alpha})}\int_0^t\|\nabla u_\alpha(s)\|_{L^2({Q_\alpha})}+\|u_\alpha(s)\|_{L^4({Q_\alpha})}^2+\|p_\alpha\|_{L^2({Q_\alpha})}\,\d s.
  $$
 The first term of this integral can be estimated by
   $$
   \int_0^t\|\nabla u_\alpha(s)\|_{L^2({Q_\alpha})}\,\d s\le t^{1/2}\int_0^t\|\nabla u_\alpha(s)\|_{L^2({Q_\alpha})}^2\,\d s.
   $$
    Using the Calder\'on--Zygmund estimate $\|p_\alpha\|_{L^2({Q_\alpha})}\le C_Z\|u_\alpha\|_{L^4({Q_\alpha})}^2$ from \eqref{CZ-p} the second and third terms can be combined; then using the Lebesgue interpolation inequality $\|f\|_{L^4}\le\|f\|_{L^2}^{1/4}\|f\|_{L^6}^{3/4}$ and the Sobolev embedding $\|f\|_{L^6({Q_\alpha})}\le C_6\|\nabla f\|_{L^2({Q_\alpha})}$ from \eqref{L6ineq}
  \begin{align*}
  \int_0^t\|u_\alpha(s)\|_{L^4({Q_\alpha})}^2\,\d s&\le\int_0^t\|u_\alpha(s)\|_{L^2({Q_\alpha})}^{1/2}\
  |u_\alpha(s)\|_{L^6({Q_\alpha})}^{3/2}\,\d s\\
  &\le C_6^{3/2}\|u_\alpha^0\|_{L^2({Q_\alpha})}^{1/2}\int_0^t\|\nabla u_\alpha(s)\|_{L^2({Q_\alpha})}^{3/2}\,\d s\\
  &\le C_6^{3/2}\|u_\alpha^0\|_{L^2({Q_\alpha})}^{1/2}t^{1/4}\left(\int_0^t\|\nabla u_\alpha(s)\|_{L^2({Q_\alpha})}^2\,\d s\right)^{3/4}.
  \end{align*}

  Therefore, for all $t\in[0,T]$,
\begin{align*}
 \frac{1}{2}\int_{x\in {Q_\alpha}:\ |x|>R}&|u_\alpha(t)|^2\le\frac{1}{2}\int_{x\in {Q_\alpha}:\ |x|>r}|u_\alpha^0|^2\\
&+\frac{\|u_\alpha^0\|_{L^2(Q_\alpha)}}{R-r}\bigg[T^{1/2}\int_0^T\|\nabla u_\alpha(s)\|_{L^2({Q_\alpha})}^2\,\d s\\
&\qquad+2C_6^{3/2}\|u_\alpha^0\|_{L^2({Q_\alpha})}^{1/2}T^{1/4}\left(\int_0^T\|\nabla u_\alpha(s)\|_{L^2({Q_\alpha})}^2\,\d s\right)^{3/4}\bigg];
 \end{align*}
 or
 $$
  \int_{x\in {Q_\alpha}:\ |x|>R}|u_\alpha(t)|^2\le\int_{x\in {Q_\alpha}:\ |x|>r}|u_\alpha^0|^2+\frac{\Gamma}{R-r},
  $$
  where $\Gamma$ can be chosen to be independent of $\alpha$. Given $\eta>0$, it follows from \eqref{4Leray} that there exist $\beta$ and $r$ such that
  $$
  \int_{x\in {Q_\alpha}:\ |x|>r}|u_\alpha^0|^2<\eta/2\qquad\mbox{for }\alpha\ge\beta.
  $$
  Now choose $R$ sufficiently large that $\Gamma/(R-r)<\eta/2$, and then increase $\beta$ if necessary so that $\beta>R+1$. There therefore exist $R(\eta)$ and $\beta(\eta)$ such that
  $$
  \int_{x\in {Q_\alpha}:\ |x|>R(\eta)}|u_\alpha(t)|^2\le\eta\qquad\mbox{for }\alpha\ge\beta(\eta),\ t\in[0,T],
  $$
  with $\beta(\eta)>R(\eta)+1$, which was \eqref{whatwewant}. Finally, it follows from \eqref{mest} that
  \begin{equation}\label{thanks}
 \int_{|x|>R(\eta)}|\tilde u_\alpha(t)|^2\le 27\eta\qquad\mbox{for }\alpha\ge\beta(\eta).
  \end{equation}

 Since $\tilde u_\alpha\to u$ in $L^2(0,T;L^2(K))$ for every compact subset $K$ of $\R^3$, it follows that $\tilde u_\alpha(t)\to u(t)$ in $L^2(B(0,n))$ for every $n\in\N$ and for almost every $t\in\R$. Given the estimate in \eqref{thanks}, it now follows from Lemma \ref{Leray-idea} that $\tilde u_\alpha(t)\to u(t)$ in $L^2(\R^3)$ for almost every $t$, i.e.\ $\|\tilde u_\alpha(t)-u(t)\|_{L^2(\R^3)}\to0$ for almost every $t$. Now observe that
 \begin{align*}
 \|\tilde u_\alpha(t)-u(t)\|_{L^2(\R^3)}&\le\|\tilde u_\alpha(t)\|_{L^2(\R^3)}+\|u(t)\|_{L^2(\R^3)}\\
 &\le 27\|u_\alpha(t)\|_{L^2(Q_\alpha)}+\|u^0\|_{L^2(\R^3)}\\
 &\le 27\|u_\alpha^0\|_{L^2(Q_\alpha)}+\|u^0\|_{L^2(\R^3)}\le 28\sqrt M;
 \end{align*}
 it follows, using the Dominated Convergence Theorem, that $\tilde u_\alpha\to u$ in $L^2(0,T;L^2(\R^3))$ (and in fact for every $p\in[1,\infty)$).

The fact that $\tilde u_\alpha\to u$ strongly in $L^2(0,T;L^2(\R^3))$ can now be used to improve the convergence of $\tilde u_\alpha$ to $u$ from weak in $L^2(0,T;H^1(\R^3))$ to strong in $L^r(0,T;H^1(\R^3))$ for all $r\in[1,\infty)$; rather than having to bound the `tails' of $\int_{|x|\ge R} |\nabla \tilde u_\alpha|^2$, all that is required is the additional information that $\tilde u_\alpha$ is uniformly bounded in $L^\infty(0,T;H^1(\R^3))$ and in $L^2(0,T;H^2(\R^3))$ (which is guaranteed by \eqref{forH2bound}). Assume that $r\ge 2$; given convergence in any such $L^r(0,T;H^1(\R^3))$, convergence with $r\in[1,2)$ follows immediately. Now note that the Sobolev interpolation inequality
$$
\|f\|_{H^1(\R^3)}\le C\|f\|_{L^2(\R^3)}^{1/2}\|f\|_{H^2(\R^3)}^{1/2}
$$
implies that
\begin{align*}
\int_0^T\|\tilde u_\alpha&-u\|_{H^1(\R^3)}^r\,\d t\le \|\tilde u_\alpha-u\|_{L^\infty(0,T;H^1(\R^3))}^{r-2}\int_0^T\|\tilde u_\alpha-u\|_{H^1(\R^3)}^2\,\d t\\
&\le C\|\tilde u_\alpha-u\|_{L^\infty(0,T;H^1(\R^3))}^{r-1}\int_0^T\|\tilde u_\alpha-u\|_{L^2(\R^3)}\|\tilde u_\alpha-u\|_{H^2(\R^3)}\,\d t\\
&\le C\|\tilde u_\alpha-u\|_{L^\infty(0,T;H^1(\R^3))}^{r-1}\left(\int_0^T\|\tilde u_\alpha-u\|_{L^2(\R^3)}^2\,\d t\right)^{1/2}\\
&\qquad\qquad\qquad\qquad\qquad\qquad\qquad\times\left(\int_0^T\|\tilde u_\alpha-u\|_{H^2(\R^3)}^2\,\d t\right)^{1/2}.
\end{align*}
%
%
%
Since $\tilde u_\alpha$ (and hence $u$) are uniformly bounded in $L^\infty(0,T;H^1(\R^3))$ and in  $L^2(0,T;H^2(\R^3))$, this implies that $\tilde u_\alpha\to u$ in $L^r(0,T;H^1(\R^3))$ as claimed.

To finish the proof, if $s=1+\theta$ with $\theta\in(0,1)$ and $r\in[1,2/\theta)$, then
$$
\|f\|_{H^{1+\theta}(\R^3)}\le C\|f\|_{H^1(\R^3)}^{1-\theta}\|f\|_{H^2(\R^3)}^\theta
$$
and so
\begin{align*}
\int_0^T\|\tilde u_\alpha&-u\|_{H^{1+\theta}(\R^3)}^r\,\d t\le C\int_0^T\|\tilde u_\alpha-u\|_{H^1(\R^3)}^{(1-\theta)r}\|\tilde u_\alpha-u\|_{H^2(\R^3)}^{\theta r}\,\d t\\
&\le C\left(\int_0^T\|\tilde u_\alpha-u\|_{H^1(\R^3)}^{2r(1-\theta)/(2-r\theta)}\,\d t\right)^{(2-r\theta)/2}\\
&\qquad\qquad\qquad\qquad\qquad\times\left(\int_0^T\|\tilde u_\alpha-u\|_{H^2(\R^3)}^2\,\d t\right)^{r\theta/2}.\qedhere
\end{align*}
\end{proof}



\section{`Transfer of regularity' from the whole space to the periodic case}\label{sec:transfer}

This final section shows that the existence of a solution on the whole space for a particular choice of initial condition is transferred to the periodic case when $\alpha$ is large enough.

\subsection{The transfer of regularity result}

The following theorem shows that if $u_0$ gives rise to a smooth solution on $[0,T^*]$ on the whole space, the corresponding periodic problems will have smooth solutions on the same time interval once the size of the periodic domain is sufficiently large. Note that $T^*$ does not need to be a `guaranteed local existence time' from the proof of the existence of strong solutions, but could be significantly longer.

The simplest particular cases of the theorem are when $u^0_\alpha\equiv u^0\in \dot H^1_\sigma(\R^3)$ for all $\alpha$ sufficiently large or when $u^0_\alpha=\curl_\alpha^{-1}\omega_0$ for some $\omega_0\in \dot H^1_\sigma(\R^3)$.
\begin{theorem}\label{thm:transfer}
  Suppose that $u_\alpha^0\in\dot H^1_\sigma({Q_\alpha})$ and $u_0\in H^1_\sigma(\R^3)$, with $\tilde u_\alpha^0\to u^0$ in $H^1(\R^3)$. Suppose in addition that there exists $T^*>0$ such that the equations on $\R^3$ with initial condition $u^0$ admit a solution
  $$
  u\in L^\infty([0,T^*];H^1(\R^3))\cap L^2(0,T^*;H^2(\R^3)).
  $$
  Then for $\alpha$ sufficiently large the equations on the periodic domain ${Q_\alpha}$ with initial data $u_\alpha^0$ have a smooth solution
  $$
  u_\alpha\in L^\infty(0,T^*;H^1({Q_\alpha}))\cap L^2(0,T^*;H^2({Q_\alpha}))
  $$
  and $\tilde u_\alpha\to u$ in $L^r(0,T^*;H^1)$, $r\in[1,\infty)$, as $\alpha\to\infty$.
  \end{theorem}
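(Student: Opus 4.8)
The plan is a continuation argument: bootstrap the short-time convergence of Theorem~\ref{strong} by repeatedly restarting the strong-solution theory of Theorem~\ref{thm:strong}, using the closeness of $u_\alpha$ to the limiting solution $u$ to keep $\|\nabla u_\alpha(t)\|_{L^2(Q_\alpha)}$ bounded all the way up to $T^*$.

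\emph{Preliminaries.} Since $\psi_\alpha\equiv1$ on $Q_\alpha$ we have $\tilde u_\alpha^0=u_\alpha^0$ there, so $\|u_\alpha^0\|_{H^1(Q_\alpha)}\le\|\tilde u_\alpha^0\|_{H^1(\R^3)}$, and as $\tilde u_\alpha^0\to u^0$ in $H^1(\R^3)$ this gives $\|u_\alpha^0\|_{H^1(Q_\alpha)}^2\le N+1$ for $\alpha$ large, where $N:=\sup_{t\in[0,T^*]}\|u(t)\|_{H^1(\R^3)}^2<\infty$ (finite since $u\in L^\infty(0,T^*;H^1)$, indeed $u\in C([0,T^*];H^1)$). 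Fix a constant $K>4(N+1)$ and let $\delta:=c(N+1)^{-2}>0$ be the guaranteed existence time of Theorem~\ref{thm:strong} for initial data with $\|\nabla\cdot\|_{L^2}^2\le N+1$. For $\alpha$ large, let $T_\alpha^{\mathrm{m}}$ be the maximal existence time of the strong solution $u_\alpha$ on $Q_\alpha$ and set
$$
\sigma_\alpha:=\sup\{t\in[0,\min(T^*,T_\alpha^{\mathrm{m}})]:\ \|\nabla u_\alpha(s)\|_{L^2(Q_\alpha)}^2\le K/2\ \text{for all}\ s\in[0,t]\}.
$$
By Theorem~\ref{thm:strong} with $M=N+1$, $u_\alpha$ exists on $[0,\delta]$ with $\|\nabla u_\alpha(t)\|_{L^2(Q_\alpha)}^2\le2(N+1)\le K/2$ there, so $\sigma_\alpha\ge\delta>0$; and since $\|\nabla u_\alpha\|^2\le K/2$ on $[0,\sigma_\alpha)$, integrating \eqref{nablau} bounds $\int_0^{\sigma_\alpha}\|\Delta u_\alpha\|_{L^2(Q_\alpha)}^2$, so by the blow-up criterion $\sigma_\alpha<T_\alpha^{\mathrm{m}}$, i.e.\ $u_\alpha$ genuinely exists on $[0,\sigma_\alpha]$.

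\emph{Main step: $\sigma_\alpha=T^*$ for all $\alpha$ large.} Suppose not; then along some $\alpha_j\to\infty$ we have $\sigma_{\alpha_j}<T^*$, and after passing to a subsequence $\sigma_{\alpha_j}\to\sigma^*\in[\delta,T^*]$. For every $\tau<\sigma^*$ we eventually have $\sigma_{\alpha_j}>\tau$, so on $[0,\tau]$ the solutions $u_{\alpha_j}$ satisfy, uniformly in $j$, the very bounds ($L^\infty(0,\tau;H^1)$ and $L^2(0,\tau;H^2)$, whence $\partial_t u_{\alpha_j}\in L^2(0,\tau;L^2)$) that the proof of Theorem~\ref{strong} extracts from its short-time analysis; the only role of the short time there was to produce those bounds (via the differential inequality \eqref{nablau}), so the remainder of that proof applies verbatim on $[0,\tau]$ and, using uniqueness of strong solutions to identify the limit with $u|_{[0,\tau]}$, yields $\tilde u_{\alpha_j}\to u$ in $L^2(0,\tau;H^1(\R^3))$. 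A diagonal argument over $\tau\uparrow\sigma^*$ then produces a further subsequence along which $\tilde u_{\alpha_j}(t)\to u(t)$ in $H^1(\R^3)$ for a.e.\ $t\in[0,\sigma^*)$. Pick such a time $t_0\in(\sigma^*-\tfrac{\delta}{2},\sigma^*)$ with in addition $\|u(t_0)\|_{H^1(\R^3)}^2\le N$; then for $j$ large, $t_0<\sigma_{\alpha_j}<T_{\alpha_j}^{\mathrm{m}}$ and, since $\tilde u_{\alpha_j}(t_0)|_{Q_{\alpha_j}}=u_{\alpha_j}(t_0)$, $\|u_{\alpha_j}(t_0)\|_{H^1(Q_{\alpha_j})}^2\le N+1$. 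Restarting Theorem~\ref{thm:strong} from time $t_0$ with data $u_{\alpha_j}(t_0)$ extends $u_{\alpha_j}$ to $[t_0,t_0+\delta]$ with $\|\nabla u_{\alpha_j}(t)\|_{L^2(Q_{\alpha_j})}^2\le2(N+1)\le K/2$ there; combined with the bound $\le K/2$ already in force on $[0,t_0]$, this gives $\|\nabla u_{\alpha_j}\|^2\le K/2$ on all of $[0,t_0+\delta]$, forcing $\sigma_{\alpha_j}\ge t_0+\delta>\sigma^*+\tfrac{\delta}{2}$ — contradicting $\sigma_{\alpha_j}\to\sigma^*$. Hence $\sigma_\alpha=T^*$ for all $\alpha$ large: $u_\alpha$ is a strong solution on $[0,T^*]$ with $\sup_{[0,T^*]}\|\nabla u_\alpha\|_{L^2(Q_\alpha)}^2\le K/2$, so \eqref{nablau} and \eqref{D2dd} give $u_\alpha\in L^\infty(0,T^*;H^1(Q_\alpha))\cap L^2(0,T^*;H^2(Q_\alpha))$, smooth on $Q_\alpha\times(0,T^*]$ by Theorem~\ref{thm:strong}.

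\emph{Conclusion.} With these uniform-in-$\alpha$ bounds now in hand on the whole of $[0,T^*]$, the proof of Theorem~\ref{strong} applies on $[0,T^*]$ without change — the bounds on $\partial_t\tilde u_\alpha$, the compactness and identification of the limit by uniqueness, the Leray-type tail estimate \eqref{4Leray}--\eqref{thanks} giving $\tilde u_\alpha\to u$ in $L^p(0,T^*;L^2(\R^3))$, and the final interpolation — whence $\tilde u_\alpha\to u$ in $L^r(0,T^*;H^1(\R^3))$ for all $r\in[1,\infty)$ (indeed in $L^r(0,T^*;H^{1+s})$ for $r<2/(s-1)$). I expect the main obstacle to be the restart step: securing a fixed positive continuation time $\delta$ independent of the restart point, which requires upgrading the time-integrated convergence to strong $H^1$-convergence $u_\alpha(t_0)\to u(t_0)$ at individual times $t_0$, and hence forces the whole argument to be run through subsequences and concluded by a contradiction on the exit time $\sigma_\alpha$ rather than directly via a ``maximal good interval''.
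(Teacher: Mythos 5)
Your proposal is correct and is essentially the paper's own argument: both hinge on the fact that the $L^r(0,\cdot;H^1)$ convergence from Theorem~\ref{strong} gives $\tilde u_\alpha(t)\to u(t)$ in $H^1(\R^3)$ at almost every individual time, so one can pick a restart time where $\|u_\alpha(t_0)\|_{H^1(Q_\alpha)}^2$ is controlled by $\sup_t\|u(t)\|_{H^1(\R^3)}^2$ and invoke the uniform local existence time of Theorem~\ref{thm:strong} to advance by a fixed amount. The paper packages this as a forward induction over $N=2T^*/\tau$ steps each gaining at least $\tau/2$, while you phrase it as a contradiction on an exit time $\sigma_\alpha$ with subsequences; the two are equivalent bookkeeping for the same mechanism.
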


  \begin{proof} Since $u\in L^\infty([0,T^*];H^1(\R^3))$ there exists $M>0$ such that
     $$
     \|u(t)\|_{H^1(\R^3)}^2\le M\qquad\mbox{for all }t\in[0,T^*].
     $$
     Theorem \ref{thm:strong} guarantees that there exists a uniform time $\tau$ such that any solution with $u(0)=v_0$, where $\|v_0\|_{H^1(\R^3)}^2\le 2M$, exists at least on the time interval $[0,\tau]$.

     Set $N=2T^*/\tau$ and fix $r\in[1,\infty)$.

     Theorem \ref{strong} ensures that $\tilde u_\alpha\to u$ in $L^r(0,T;H^1(\R^3))$ as $\alpha\to\infty$. In particular, $\tilde u_\alpha(t)\to u(t)$ in $H^1(\R^3)$ for almost every $t\in(0,\tau)$; choose one such $t$ with $t>\tau/2$ and call this $t_1$.

Choose $\alpha_1$ such that $\|\tilde u_\alpha(t_1)\|_{H^1(\R^3)}\le 2M$ for all $\alpha\ge \alpha_1$. Since
$$
\|u_\alpha(t_1)\|_{H^1({Q_\alpha})}\le\|\tilde u_\alpha(t_1)\|_{H^1(\R^3)},
$$
this bound is enough to ensure that, uniformly for $\alpha\ge\alpha_1$, the solutions on ${Q_\alpha}$ starting from $u_\alpha(t_1)$ exist on the time interval $[t_1,t_1+\tau]\supset[\tau,3\tau/2]$.

 Since $\tilde u_\alpha(t_1)\to u(t_1)$ in $H^1(\R^3)$, Theorem \ref{strong} can again be used to guarantee that as $\alpha\to\infty$ ($\alpha\ge\alpha_1$), have $\tilde u_\alpha\to u$ in $L^r(t_1,t_1+\tau;H^1(\R^3))$. Again, the convergence in $H^1(\R^3)$ for almost-every time means that there exists $t_2\in(t_1,t_1+\tau)$ with $t_2>t_1+\tau/2>\tau$ such that $\tilde u_\alpha(t_2)\to u(t_2)$ in $H^1(\R^3)$; in particular, there exists $\alpha_2\ge\alpha_1$ such that $\|u_\alpha(t_2)\|_{H^1({Q_\alpha})}\le 2M$ for all $\alpha\ge\alpha_2$.

  Continue in this way, noting that at each step the interval of existence of the solutions on ${Q_\alpha}$ (for $\alpha\ge\alpha_n$) increases by at least $\tau/2$. After $N$ steps the entire interval $[0,T^*]$ has been covered, showing that the solution on ${Q_\alpha}$ starting at $u_\alpha^0$ is strong on $[0,T^*]$ for all $\alpha\ge\alpha_N$.\end{proof}

Note that this result does not say that if the equations are regular on $\R^3$ -- i.e.\ if \emph{any} smooth (compactly-supported) initial condition gives rise to a smooth solution for all $t>0$ -- then they are regular on $Q_\alpha$ for $\alpha$ large enough (which would then imply regularity on $Q_\alpha$ for any $\alpha$). Rather, for a fixed (compactly-supported) initial condition, regularity on $\R^3$ on a given time interval carries over to $Q_\alpha$ for $\alpha$ sufficiently large.

A full `transfer of regularity' from one problem to another would require a convergence result in which the distance between solutions on $\R^3$ and $Q_\alpha$ could be bounded in terms of the $H^1$ norm of the initial data, which appears to require much more sophisticated methods that the compactness-based arguments employed here. [For results in this direction for the Ginzburg--Landau equation see Mielke (1997) and for the two-dimensional Navier--Stokes equations see Zelik (2013).]

\section*{Conclusion}

Given fixed sufficiently regular initial data with compact support, solutions of the Navier--Stokes equations on expanding periodic domains converge to the corresponding solution on the whole space; and this can to some extent be `reversed', in that a compactly-supported initial condition that leads to a strong solution on a time interval $[0,T^*]$ (which could be significantly longer than what is guaranteed by standard existence theorems) will give rise to a strong solution on the same time interval on a sufficiently large periodic domain.

It is natural to conjecture that a similar result holds given any choice of smooth, simply-connected, bounded subset $\Omega$ of $\R^3$, replacing $(-\alpha,\alpha)^3$ by $\alpha\Omega$ and imposing no-slip (Dirichlet) boundary conditions on the boundary of $\alpha\Omega$. However, the estimates on the pressure required in the proof given here become much more delicate in the case of a bounded domain (see Sohr \& von Wahl, 1986, for example).

While the results here demonstrate convergence, they give no error estimates; this appears to be a significantly harder problem, but a particularly interesting one if one is to view solving the equations on a periodic domain as a `numerical approximation' to the solution of the equations on the whole space.

\section*{Acknowledgments}

Many thanks to Robert Kerr, for interesting discussions that motivated the problem considered here. Thanks also to Wojciech O\.za\'nski and Jos\'e Rodrigo for helpful conversations about some of the issues that needed to be resolved, and for numerous insightful comments on a preliminary version of the paper. In particular, Wojciech pointed out that a minor change in the argument would give convergence in $L^p(0,T;H^1(\R^3))$ for all $1\le p<\infty$.

     \section*{References}\parindent0pt\parskip7pt

     Ambrose, D.M., Kelliher, J.P., Lopes Filho, M.C., Nussenzveig Lopes, H.J. (2015) Serfati solutions to the 2D Euler equations on exterior domains. \textit{J. Diff. Eq.} {\bf 259}, 4509--4560.

     Constantin, P. (1986) Note on loss of regularity for solutions of the 3-D incompressible Euler and related equations. \textit{Commun. Math. Phys.} {\bf 104}, 311--326.

     Chernyshenko, S.I., Constantin, P., Robinson, J.C., \& Titi, E.S. (2007) A posteriori regularity of the three-dimensional Navier--Stokes equations from numerical computations. \textit{J. Math. Phys.} {\bf 48}, 065204.

     Doering, C.R., \& Gibbon, J.D. (1995) \textit{Applied analysis of the Navier--Stokes equations}. Cambridge University Press, Cambridge, UK.

      Enciso, A., García-Ferrero, M.A., \& Peralta-Salas, D. (2018) The Biot--Savart operator of a bounded domain. \textit{J. Math. Pures Appl.} {\bf 119}, 85--113.

     Evans, L.C. (2010) \textit{Partial differential equations}. American Mathematical Society, Providence, RI.

     Gallagher, I. (1997) The tridimensional Navier--Stokes equations with almost bidimensional data: stability, uniqueness, and life span. \textit{Internat. Math. Res. Notices} {\bf 18}, 919--935.


 Heywood, J.G. (1976) On uniqueness questions in the theory of viscous flow. \textit{Acta Math.} {\bf 136}, 61--102.

     Heywood, J.G. (1988) Epochs of regularity for weak solutions of the Navier--Stokes equations in unbounded domains. \textit{Tohoku Math} {\bf 40}, 293--313.

     Hopf, E. (1951) \"Uber die Aufgangswertaufgave f\"ur die hydrodynamischen Grundliechungen. \textit{Math. Nachr.} {\bf 4}, 213--231.

     Kato, T. (1972) Nonstationary flow of viscous and ideal fluids in $\R^3$. \textit{J. Funct. Anal.} {\bf 9}, 296--305.

     Kerr, R.M. (2018) Enstrophy and circulation scaling for Navier--Stokes reconnection. \textit{J. Fluid Mech.} {\bf 839}, R2.

     Leray, J. (1934) Essai sur le mouvement d'un liquide visqueux emplissany l'espace. \textit{Acta Math.} {\bf 63}, 193--248.

     Ladyzhenskaya, O.A. (1969) \textit{The mathematical theory of viscous incompressible flow}. Gordon and Breach, New York, NY.

     Mielke, A. (1997) The complex Ginzburg--Landau equation on large and unbounded domains: sharper bounds and attractors. \textit{Nonlinearity} {\bf 10}, 199--222.

     O\.za\'nski, W. \& Pooley, B. (2018) Leray's fundamental work on the Navier--Stokes equations: a modern review of ``Sur le mouvement d'un liquid visqueux emplissant l'espace". pp.~113--203 in Fefferman, C.L., Rodrigo, J.L., \& Robinson, J.C. (Eds.) \emph{Partial differential equations in fluid mechanics}. LMS Lecture Notes. Cambridge University Press, Cambridge, UK.

     Raugel, G. \& Sell, G.R. (1993) Navier--Stokes equations on thin 3D domains. I. Global attractors and global regularity of solutions. \textit{J. Amer. Math. Soc.} {\bf 6}, 503--568.

     Robinson, J.C. (2004) A coupled particle-continuum model: well-posedness and the limit of zero radius. \textit{Roy. Soc. London Proc. A} {\bf 460}, 131--1334.

     Robinson, J.C. (2020) \textit{An introduction to functional analysis}. Cambridge University Press, Cambridge, UK.

     Robinson, J.C., Rodrigo, J.L., \& Sadowski, W. (2016) \textit{The three-dimensional Navier--Stokes equations}. Cambridge studies in advanced mathematics 157. Cambridge University Press, Cambridge, UK.

     Scheeler, M.W., Kleckner, D., Proment, D., Kindlmann, G.L., \& Irvine, W.T.M. (2014) Helicity conservation by flow across scales in reconnecting vortex links and knots. \textit{Proc. Natl Acad. Sci. USA} {\bf 111}, 15350--15355.

     Serfati, P. (1995) Solutions $C^\infty$ en temps, $n$-log Lipschitz born\'ees en espace et \'equation d'Euler. \textit{C. R. Acad. Sci. Paris S\'er. I Math.} {\bf 320}, 555--558.

     Simon, J. (1987) Compact sets in the space $L^p(0,T;B)$. \textit{Ann. Mat. Pura Appl.} {\bf 146}, 65--96.

     Sohr, H. \& von Wahl, W. (1986) On the regularity of the pressure of weak solutions of Navier--Stokes equations.     \emph{Arch. Math. (Basel)} {\bf 46}, 428--439.

     Swann, H. (1971) The convergence with vanishing viscosity of nonstationary Navier--Stokes flow to ideal flow in $\R^3$. \textit{Trans. Amer. Math. Soc.} {\bf 157}, 373--397.

     Tao, T. (2013) Localisation and compactness properties of the Navier--Stokes global regularity problem. \textit{Analysis \& PDE} {\bf 6}, 25--107.

     Zelik, S. (2013) Infinite energy solutions for damped Navier--Stokes equations in $\R^2$. \textit{J. Math. Fluid Mech.} {\bf 15}, 717--745.

\end{document}